\newtheorem{theorem}{Theorem}[section]
\newtheorem{defn}[theorem]{Definition}
\newtheorem{lemma}[theorem]{Lemma}
\newtheorem{eple}[theorem]{Example}
\newtheorem{rmk}[theorem]{Remarks}
\newtheorem{dsc}[theorem]{Discussion}
\newtheorem{nota}[theorem]{Notation}
\newsavebox{\indbin}
\savebox{\indbin}{\begin{picture}(0,0)
\newlength{\gnu}
\settowidth{\gnu}{$\smile$} \setlength{\unitlength}{.5\gnu}
\put(-1,-.65){$\smile$} \put(-.25,.1){$|$}
\end{picture}}
\newcommand{\be}{\begin{enumerate}}
\newcommand{\bd}{\begin{defn}}
\newcommand{\bt}{\begin{theorem}}
\newcommand{\bl}{\begin{lemma}}
\newcommand{\ee}{\end{enumerate}}
\newcommand{\ed}{\end{defn}}
\newcommand{\et}{\end{theorem}}
\newcommand{\el}{\end{lemma}}
\begin{document}
\title{A Simple Proof of the Fourier Inversion Theorem Using Nonstandard Analysis}
\author{Tristram de Piro}
\address{Mathematics Department, Harrison Building, Streatham Campus, University of Exeter, North Park Road, Exeter, Devon, EX4 4QF, United Kingdom}
 \email{tdpd201@exeter.ac.uk}
\maketitle
\begin{abstract}

We give a proof of the Fourier Inversion Theorem, using the methods of nonstandard analysis.
\end{abstract}

We first make the following, which can be found in \cite{SS};\\

\begin{defn}
\label{schwartz}
We denote by the Schwartz space $S(\mathcal{R})$, the set of all functions $g:\mathcal{R}\rightarrow\mathcal{C}$, such that $g$ and all its derivatives $\{g',g'',\ldots,g^{(n)},\ldots,\}_{n\in\mathcal{N}}$ are rapidly decreasing, in the sense that;\\

$sup_{x\in\mathcal{R}}|x|^{k}|g^{(n)}(x)|<\infty$. (for all $k,n\geq 0$)\\

For such a function $g$, we define its Fourier transform by;\\

$\hat{g}(t)=\int_{-\infty}^{\infty} g(x)e^{-\pi ixt} dx$\\

\end{defn}

\begin{rmk}
\label{remark,schwartz}
It is a well known fact that, if $g\in S(\mathcal{R})$, then its Fourier transform $\hat{g}\in S(\mathcal{R})$ as well, see \cite{SS}. However, this is, perhaps, not the usual definition of the Fourier transform. In \cite{SS}, it is given as;\\

$\hat{g}(t)=\int_{-\infty}^{\infty}g(x)e^{-2\pi ixt} dx$\\

while, in \cite{Kor}, it is defined as;\\

$\hat{g}(t)=\int_{-\infty}^{\infty} g(x)e^{-ixt} dx$\\

Of course, these definitions only differ by a scaling factor, but for each one you choose, you get a distinct rescaled statement of the Inversion Theorem. Once you have proved the Fourier Inversion theorem for one definition, you obtain the other statements by a simple change of variables. The reason for our choice of notation will become apparent later.

\end{rmk}

\begin{theorem}{Fourier Inversion Theorem}\\
\label{fit}

Let $g\in S(\mathcal{R})$, then;\\

$g(x)={1\over 2}\int_{-\infty}^{\infty} \hat{g}(t)e^{\pi ixt}dt$ for all $x\in{\mathcal R}$.

\end{theorem}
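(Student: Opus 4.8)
\medskip

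\noindent\textbf{Proof sketch.} The plan is to regularise the inversion integral by a Gaussian factor $e^{-\pi\epsilon t^2}$ with $\epsilon$ a positive infinitesimal, and then to recognise the result as a nonstandard approximation to the identity, so that the classical ``$\epsilon\to 0$'' limiting procedure is performed once and for all at the infinitesimal level. Fix $x\in\mathcal{R}$. Since $g\in S(\mathcal{R})$ we have $\hat{g}\in S(\mathcal{R})$ by Remark~\ref{remark,schwartz}, so $F(x):=\int_{-\infty}^{\infty}\hat{g}(t)\,e^{\pi i x t}\,dt$ converges absolutely, and it suffices to prove $F(x)=2g(x)$. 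For real $\epsilon>0$ put
\[
I_\epsilon(x)=\int_{-\infty}^{\infty}\hat{g}(t)\,e^{\pi i x t}\,e^{-\pi\epsilon t^2}\,dt ,
\]
and understand $I_\epsilon(x)$ for infinitesimal $\epsilon$ via the $*$-extension of the standard function $\epsilon\mapsto I_\epsilon(x)$. I shall show that for every positive infinitesimal $\epsilon$ one has both $I_\epsilon(x)\approx F(x)$ and $I_\epsilon(x)\approx 2g(x)$; since $F(x)$ and $2g(x)$ are standard complex numbers, this forces $F(x)=2g(x)$, which is the asserted formula.

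For the first approximation, note that $|F(x)-I_\epsilon(x)|\le{}^{*}h(\epsilon)$, where $h(\epsilon)=\int_{-\infty}^{\infty}|\hat{g}(t)|\,\bigl(1-e^{-\pi\epsilon t^2}\bigr)\,dt$: this uses the (transferred) triangle inequality for the $*$-integral together with the fact, true by transfer, that $0\le 1-e^{-\pi\epsilon t^2}<1$ for every hyperreal $t$. Now $h$ is a standard function of the real variable $\epsilon$ with $\lim_{\epsilon\to 0^{+}}h(\epsilon)=0$ (dominated convergence, dominating function $|\hat{g}|$, which is integrable), so by the nonstandard characterisation of limits ${}^{*}h(\epsilon)\approx 0$ whenever $\epsilon$ is a positive infinitesimal, whence $I_\epsilon(x)\approx F(x)$.

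For the second approximation, substitute the definition of $\hat{g}$ and interchange the two integrations---justified by transfer of Fubini's theorem, which applies in the standard model to the absolutely integrable integrand $g(y)\,e^{\pi i(x-y)t-\pi\epsilon t^2}$ for each real $\epsilon>0$---to obtain
\[
I_\epsilon(x)=\int_{-\infty}^{\infty}g(y)\Bigl(\int_{-\infty}^{\infty}e^{\pi i(x-y)t-\pi\epsilon t^2}\,dt\Bigr)dy=\int_{-\infty}^{\infty}g(y)\,K_\epsilon(x-y)\,dy ,
\]
where $K_\epsilon(u)=\epsilon^{-1/2}\,e^{-\pi u^2/(4\epsilon)}$; the inner integral is the elementary Gaussian evaluation $\int e^{-at^2+bt}\,dt=\sqrt{\pi/a}\;e^{b^2/(4a)}$ with $a=\pi\epsilon$ and $b=\pi i(x-y)$, which one gets by completing the square and shifting the line of integration, or by checking that both sides solve the same first-order linear ODE in $x$ with the same value at $x=0$. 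An immediate computation gives $\int_{-\infty}^{\infty}K_\epsilon(u)\,du=2$ for every real $\epsilon>0$, hence for our infinitesimal $\epsilon$ by transfer. Now fix a positive infinitesimal $r$ with $r/\sqrt{\epsilon}$ infinite and split the $y$-integral over $|y-x|<r$ and $|y-x|\ge r$. Over the outer region, $\int_{|u|\ge r}K_\epsilon(u)\,du=2\int_{|v|\ge r/(2\sqrt{\epsilon})}e^{-\pi v^2}\,dv$, which is infinitesimal since $r/(2\sqrt{\epsilon})$ is infinite and the standard Gaussian has infinitesimal tail beyond any infinite point (transfer); as $g$ is bounded, the outer contribution to $I_\epsilon(x)$ is infinitesimal. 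Over the inner region, every such $y$ satisfies $y\approx x$, so ${}^{*}g(y)\approx g(x)$ by the $S$-continuity of the standard continuous function $g$; since the supremum of $|{}^{*}g(y)-g(x)|$ over the internal interval $[x-r,x+r]$ is attained (transfer of the extreme value theorem) and hence infinitesimal, $\int_{|y-x|<r}g(y)\,K_\epsilon(x-y)\,dy\approx g(x)\int_{|y-x|<r}K_\epsilon(x-y)\,dy\approx 2g(x)$. Adding the two parts yields $I_\epsilon(x)\approx 2g(x)$, and the proof is complete.

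The step I expect to need the most care is the middle one: presenting the interchange of integrations as a genuinely internal (i.e.\ transferred) statement rather than waving it through, and evaluating the Gaussian integral with a purely imaginary linear term cleanly. Neither is deep---the Gaussian identity is entirely classical and the interchange is a one-line transfer---but they are precisely where an unguarded ``clearly'' could hide a slip. Apart from that, the argument is just the classical Gaussian-regularisation and approximate-identity proof, with the whole $\epsilon$-limiting process compressed into the single choice of an infinitesimal $\epsilon$ together with a subordinate infinitesimal $r$.
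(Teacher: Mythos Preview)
Your argument is correct, but it follows a genuinely different route from the paper's. The paper does not regularise the integral at all: its central idea is to \emph{discretise}. It proves an exact inversion theorem on the finite group $G_{n^{2},n}$ (Theorem~\ref{pgit}), transfers it to the hyperfinite lattice $\overline{\mathcal{R}}_{\eta}$ (Lemma~\ref{nsit}), and then spends most of the work establishing that $g_{\eta}$ and $\hat{g}_{\eta}$ are $S$-integrable (Theorems~\ref{gSinteg} and~\ref{transgSinteg}, via the discrete-calculus and tail estimates of Lemmas~\ref{disccalc}--\ref{tailvanish}) so that one may pass from the internal identity to the Loeb integral and thence, via the measure-preserving standard-part map of Theorem~\ref{mmp}, to the Lebesgue integral. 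The nonstandard content is therefore substantial: the continuous inversion theorem is literally deduced from the finite-group inversion theorem by hyperfinite approximation and Loeb measure theory.

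Your proof, by contrast, is the classical Gaussian approximate-identity argument with the limit $\epsilon\to 0$ replaced by a single infinitesimal $\epsilon$; every analytic step (Fubini, the Gaussian integral, dominated convergence, the approximate-identity split) is first carried out for standard $\epsilon>0$ and then transferred. This buys brevity and avoids the $S$-integrability machinery entirely, but the nonstandard analysis is cosmetic: one could delete all mention of infinitesimals and write $\lim_{\epsilon\to 0^{+}}$ with no change in substance. The paper's approach is longer and needs more infrastructure, but it genuinely exploits the hyperfinite setting to reduce a continuous theorem to a finite one, which is the advertised point of the paper.
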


\begin{rmk}
\label{remark,fit}
There are many standard proofs of this result, for example in \cite{SS}. This is not the best statement possible. In \cite{Kor}, the requirement that $g\in S(\mathcal{R})$ is weakened to $g\in L^{1}(\mathcal{R})\cap C$ and $\hat{g}\in L^{1}(\mathcal{R})\cap C$, where $C$ denotes the space of complex valued continuous functions on $\mathcal{R}$. In our proof, we do not actually require that $g\in S(\mathcal{R})$, but we do need some assumptions about the differentiability of $g$, and also about its rate of decrease. We have chosen this assumption, mainly because the Schwartz space seems to be often used in the presentation of the Fourier Inversion Theorem.
\end{rmk}

We now introduce the principal spaces which we are going to work with;\\

\begin{defn}
\label{spaces}
Let $\eta\in{{^{*}{\mathcal{N}}}\setminus\mathcal{N}}$, and $\omega\in{^{*}{\mathcal{N}}}$, with $\omega\geq n\eta$, for \emph{all} $n\in\mathcal{N}$. We define;\\

$\overline{\mathcal R}_{\omega,\eta}=\{\tau\in{^{*}{\mathcal R}}:-{\omega\over\eta}\leq\tau<{\omega\over\eta}\}$\\

We let $\mathfrak{C}$ be the ${*}$-finite algebra consisting of internal unions of intervals of the form $[{i\over\eta},{i+1\over\eta})$, for $-\omega\leq i<\omega$.\\

We define a counting measure on $\mathfrak{C}$ by $\lambda([{i\over\eta},{i+1\over\eta}))={1\over\eta}$.\\

Then $(\overline{\mathcal R}_{\omega,\eta},\mathfrak{C},\lambda)$ is a hyperfinite measure space with $\lambda(\overline{\mathcal R}_{\omega,\eta})={2\omega\over\eta}$.\\

We denote by $(\overline{\mathcal R}_{\omega,\eta},L(\mathfrak{C}),L(\lambda))$ the associated Loeb space, (\footnote{\label{henson} The existence of such a space follows from \cite{Loeb}. However, the uniqueness of the extension of ${^{\circ}\lambda}$ to $\sigma(\mathfrak{C})$ was only shown there in the case that $\lambda$ is finite. Later, Ward Henson proved the uniqueness of the extended measure, even in the case that $\lambda$ is infinite. After producing the extension, we are then passing to the completion, see \cite{cut}.}).\\

We let $(\mathcal{R},\mathfrak{B},\mu)$ denote the completion of the Borel field $\mathfrak{D}$ on $\mathcal{R}$, with respect to Lebesgue measure $\mu$, (\footnote{\label{uniquelebesgue} Again, Caratheodory's Theorem provides the existence of Lebesgue measure $\mu$ on the $\sigma$-algebra $\mathfrak{D}$ generated by the open sets. Uniqueness of the extension follows easily by restricting to finite intervals.}).\\

We let $\mathcal{R}^{+-\infty}$ denoted the extended real line $\mathcal{R}\cup\{+\infty,-\infty\}$, and let $\{g_{\infty},\hat{g}_{\infty}\}$ be the extensions of functions in Definition \ref{schwartz}, obtained by setting $g_{\infty}(+\infty)=g_{\infty}(-\infty)=0$, and similarly for $\hat{g}_{\infty}$.

\end{defn}

\begin{lemma}
\label{unique}
There exists a unique $\sigma$-algebra $\mathfrak{B}'$ on $\mathcal{R}^{+-\infty}$, which separates the points $+\infty$ and $-\infty$, and such that $\mathfrak{B}'|_{\mathcal{R}}=\mathfrak{B}$. Moreover, there is a unique extension of $\mu$ to a measure $\mu'$ on $\mathfrak{B}'$ with the property that $\mu(\infty)=\mu(-\infty)=\infty$. The same holds with $\mathfrak{D}$ and $\mathfrak{D}'$ replacing $\mathfrak{B}$ and $\mathfrak{B}'$. The resulting measure space $(\mathcal{R}^{+-\infty},\mathfrak{B}',\mu')$ is the completion of $(\mathcal{R}^{+-\infty},\mathfrak{D}',\mu')$.

\end{lemma}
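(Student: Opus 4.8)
The plan is to build $\mathfrak{B}'$ explicitly and then verify uniqueness, measure extension, and completeness in turn. First I would define a candidate $\sigma$-algebra: let $\mathfrak{B}'$ consist of all sets of the form $B$, $B\cup\{+\infty\}$, $B\cup\{-\infty\}$, and $B\cup\{+\infty,-\infty\}$, where $B$ ranges over $\mathfrak{B}$. A routine check shows this family is closed under complementation in $\mathcal{R}^{+-\infty}$ and under countable unions, so it is a $\sigma$-algebra; it clearly restricts to $\mathfrak{B}$ on $\mathcal{R}$, and it separates $+\infty$ from $-\infty$ since, e.g., $\{+\infty\} = \emptyset\cup\{+\infty\}\in\mathfrak{B}'$ while $-\infty\notin\{+\infty\}$. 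For uniqueness: if $\mathfrak{B}''$ is any $\sigma$-algebra with $\mathfrak{B}''|_{\mathcal{R}}=\mathfrak{B}$ separating the two points, then there is a set $A\in\mathfrak{B}''$ containing exactly one of $\pm\infty$; intersecting $A$ and $A^c$ with suitable members of $\mathfrak{B}''$ that trace out $\mathfrak{B}$ on $\mathcal{R}$ lets one isolate $\{+\infty\}$ and $\{-\infty\}$ individually as members of $\mathfrak{B}''$, and then $\mathfrak{B}''\supseteq\mathfrak{B}'$; conversely any member of $\mathfrak{B}''$ meets $\mathcal{R}$ in a set of $\mathfrak{B}$, so $\mathfrak{B}''\subseteq\mathfrak{B}'$, giving equality.

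Next I would treat the measure. Define $\mu'$ on $\mathfrak{B}'$ by $\mu'(B\cup F)=\mu(B)$ when $F\subseteq\{+\infty,-\infty\}$ is empty, and $\mu'(B\cup F)=\infty$ when $F\neq\emptyset$; equivalently $\mu'(E)=\mu(E\cap\mathcal{R})+\infty\cdot\lvert E\cap\{+\infty,-\infty\}\rvert$. Countable additivity is immediate by splitting any disjoint countable union into its real part (where $\mu$ is countably additive) and its at-most-two-point part at infinity. This $\mu'$ restricts to $\mu$ on $\mathfrak{B}$ and assigns $\infty$ to each of $\{+\infty\},\{-\infty\}$. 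For uniqueness of the extension: any measure $\nu$ on $\mathfrak{B}'$ restricting to $\mu$ with $\nu(\{\pm\infty\})=\infty$ must, by additivity over the partition $E=(E\cap\mathcal{R})\sqcup(E\cap\{+\infty,-\infty\})$, satisfy $\nu(E)=\mu(E\cap\mathcal{R})$ when $E$ misses both points and $\nu(E)=\infty$ otherwise, so $\nu=\mu'$.

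The same construction, verbatim with $\mathfrak{D}$ in place of $\mathfrak{B}$, produces $\mathfrak{D}'$ and establishes the corresponding uniqueness statements; note $\mathfrak{D}'$ separates $\pm\infty$ because $\emptyset\in\mathfrak{D}$. Finally, for completeness of $(\mathcal{R}^{+-\infty},\mathfrak{B}',\mu')$ as the completion of $(\mathcal{R}^{+-\infty},\mathfrak{D}',\mu')$: a $\mu'$-null set $N\in\mathfrak{B}'$ must miss $\{+\infty,-\infty\}$ (those points have infinite, hence nonzero, measure), so $N\subseteq\mathcal{R}$ and $N$ is $\mu$-null in $\mathfrak{B}$; thus any subset $N_0\subseteq N$ lies in $\mathfrak{B}$ by completeness of $(\mathcal{R},\mathfrak{B},\mu)$, hence in $\mathfrak{B}'$, showing $\mathfrak{B}'$ is $\mu'$-complete. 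Conversely $\mathfrak{B}'$ is generated over $\mathfrak{D}'$ by adjoining $\mu'$-null sets: every $B\in\mathfrak{B}$ differs from some $D\in\mathfrak{D}$ by a $\mu$-null set $\subseteq\mathcal{R}$, which is $\mu'$-null, and the four-fold decomposition above shows each member of $\mathfrak{B}'$ is a member of $\mathfrak{D}'$ symmetric-difference a $\mu'$-null set. Hence $(\mathcal{R}^{+-\infty},\mathfrak{B}',\mu')$ is exactly the completion of $(\mathcal{R}^{+-\infty},\mathfrak{D}',\mu')$.

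I expect the main obstacle to be purely bookkeeping rather than conceptual: carefully arguing the uniqueness of $\mathfrak{B}'$, i.e. that a single separating set in an abstract $\mathfrak{B}''$ forces both singletons $\{+\infty\}$ and $\{-\infty\}$ into $\mathfrak{B}''$, needs the observation that $\mathfrak{B}''$ already contains enough real sets (all of $\mathfrak{B}$, via the restriction hypothesis) to cut the separating set down to the singletons. Everything else is a routine splitting argument along the partition $\mathcal{R}\sqcup\{+\infty\}\sqcup\{-\infty\}$.
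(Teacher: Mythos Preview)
Your proposal is correct and follows essentially the same route as the paper: the paper builds $\mathfrak{B}'$ as the fourfold union $\mathfrak{B}\cup\mathfrak{B}_{+\infty}\cup\mathfrak{B}_{-\infty}\cup\mathfrak{B}_{+-\infty}$, isolates $\{+\infty\}$ from a separating set by intersecting with $\bigcap_n(-n,n)^c$, defines $\mu'$ to be $\mu$ on $\mathfrak{B}$ and $\infty$ elsewhere, and deduces the completeness claim from the fact that any $\mu'$-null set lies in $\mathfrak{B}$. Your write-up is slightly more explicit about the converse direction of the completion statement (that every $\mathfrak{B}'$-set differs from a $\mathfrak{D}'$-set by a $\mu'$-null set), which the paper leaves implicit.
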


\begin{proof}
The construction of $\mathfrak{B}'$ is easy. We let $\mathfrak{B}_{+\infty}$ consist of all sets of the form $B\cup\{+\infty\}$, where $B\in\mathfrak{B}$, and, similarly, define $\mathfrak{B}_{-\infty}$ and $\mathfrak{B}_{+-\infty}$. Then, let $\mathfrak{B}'=\mathfrak{B}\cup\mathfrak{B}_{+\infty}\cup\mathfrak{B}_{-\infty}\cup\mathfrak{B}_{+-\infty}$. Clearly, $\mathfrak{B}'$ separates the points $+\infty$ and $-\infty$, moreover $\mathfrak{B}'|_{\mathcal{R}}=\mathfrak{B}$. It is a simple exercise to verify that $\mathfrak{B}$ is a $\sigma$-algebra. In order to see uniqueness, let $\mathfrak{B}''$ have these properties. As $\mathfrak{B}''|_{\mathcal{R}}=\mathfrak{B}$, we have $\mathfrak{B}\subset\mathfrak{B}''$. Choose a set $B$ containing $+\infty$, but not $-\infty$, then $\{+\infty\}=B\cap\bigcap_{n\in\mathcal{N}}(-n,n)^{c}$ belongs to $\mathfrak{B}''$. Moreover $\{+\infty,-\infty\}={\mathcal{R}^{+-\infty}\setminus\mathcal{R}}$ belongs to $\mathfrak{B}''$, so, $-\infty$ belongs to $\mathfrak{B}''$. Hence, $\mathfrak{B'}\subset\mathfrak{B}''$. If $C$ belongs to $\mathfrak{B}''$, then clearly $C\cap\mathcal{R}\in\mathfrak{B}$, so it must be of the above form, that is $\mathfrak{B}'=\mathfrak{B}''$. Now define $\mu'$ by setting $\mu=\mu'$ on $\mathfrak{B}$, and letting $\mu'(C)=\infty$, for any $C\in{\mathfrak{B}'\setminus\mathfrak{B}}$. It is straightforward to see that $\mu'$ defines a measure, with $\mu'(\infty)=\mu'(-\infty)=\infty$, extending $\mu$. If $\mu''$ satisfies these properties, then as any set $C\in{\mathfrak{B}'\setminus\mathfrak{B}}$ contains at least one of $\{+\infty,-\infty\}$, it must be $\infty$ on these sets, so $\mu'=\mu''$. Exactly the same argument gives the result for $\mathfrak{D}$ and $\mathfrak{D}'$. The completeness statement follows directly as $(\mathcal{R},\mathfrak{B},\mu)$ is complete, and any set of measure $0$, $\mu'$, in $\mathfrak{B}'$, belongs to $\mathfrak{B}$.

\end{proof}

\begin{theorem}
\label{mmp}
The standard part mapping;\\

$st:(\overline{\mathcal R}_{\omega,\eta},L(\mathfrak{C}),L(\lambda))\rightarrow(\mathcal{R}^{+-\infty},\mathfrak{B}',\mu')$\\

is measurable and measure preserving. In particular, if $\{g_{\infty},\hat{g}_{\infty}\}$ are as in Definition \ref{spaces}, and $\{st^{*}(g_{\infty}),st^{*}(\hat{g}_{\infty})\}$ are their pullbacks under $st$, then, $\{st^{*}(g_{\infty}),st^{*}(\hat{g}_{\infty})\}$ are integrable with respect to $L(\lambda)$, $\{g_{\infty},\hat{g}_{\infty}\}$ are integrable with respect to $\mu'$, $\{g,\hat{g}\}$ are integrable with respect to $\mu$, and;\\

$\int_{\overline{\mathcal R}_{\omega,\eta}}st^{*}(g_{\infty})dL(\lambda)=\int_{\mathcal{R}^{+-\infty}} g_{\infty}d \mu'=\int_{\mathcal{R}}g d\mu$\\

$\int_{\overline{\mathcal R}_{\omega,\eta}}st^{*}(\hat{g}_{\infty}) dL(\lambda)=\int_{\mathcal{R}^{+-\infty}} \hat{g}_{\infty} d \mu'=\int_{\mathcal{R}}\hat{g} d\mu$\\

\end{theorem}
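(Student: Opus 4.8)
The plan is to establish the measurability and measure-preserving property of $st$ by the standard machinery of Loeb measure theory, working on the $*$-finite algebra $\mathfrak{C}$ and then extending. First I would show that for every finite interval $[a,b)\subseteq\mathcal{R}$, the preimage $st^{-1}([a,b))$ is in $L(\mathfrak{C})$ and $L(\lambda)(st^{-1}([a,b)))=b-a$. The point is that $st^{-1}([a,b))$ is sandwiched between the internal sets $A_-=\bigcup\{[\tfrac{i}{\eta},\tfrac{i+1}{\eta}):a<\tfrac{i}{\eta},\tfrac{i+1}{\eta}<b\}$ and $A_+=\bigcup\{[\tfrac{i}{\eta},\tfrac{i+1}{\eta}):[\tfrac{i}{\eta},\tfrac{i+1}{\eta})\cap(a,b)\neq\emptyset\}$, and since $\lambda(A_\pm)=(b-a)+O(\tfrac{1}{\eta})\approx b-a$ with $\tfrac{1}{\eta}$ infinitesimal, taking standard parts gives $L(\lambda)(st^{-1}([a,b)))=b-a=\mu([a,b))$. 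Here the hypothesis $\omega\geq n\eta$ for all $n\in\mathcal{N}$ guarantees that $\tfrac{\omega}{\eta}$ is infinite, so every finite interval actually sits inside $\overline{\mathcal R}_{\omega,\eta}$ and nothing is lost at finite scale. Likewise $st^{-1}(\{+\infty\})=\bigcup\{[\tfrac{i}{\eta},\tfrac{i+1}{\eta}):\tfrac{i}{\eta}\text{ infinite positive}\}$ is the internal set $\overline{\mathcal R}_{\omega,\eta}\setminus st^{-1}(\mathcal{R})$ restricted to positive coordinates, and its Loeb measure is $\infty$ since $\lambda$ of the finite part is finite for each finite bound but $\lambda(\overline{\mathcal R}_{\omega,\eta})=\tfrac{2\omega}{\eta}$ is infinite; similarly for $-\infty$.

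Next I would upgrade from intervals to all of $\mathfrak{D}'$ and then $\mathfrak{B}'$. The collection of sets $E\subseteq\mathcal{R}^{+-\infty}$ with $st^{-1}(E)\in L(\mathfrak{C})$ is a $\sigma$-algebra, and it contains the finite half-open intervals and the singletons $\{\pm\infty\}$, hence contains $\mathfrak{D}'$ by Lemma \ref{unique} (the latter identifies $\mathfrak{D}'$ as generated by $\mathfrak{D}$ together with separating $\pm\infty$). On this $\sigma$-algebra the map $E\mapsto L(\lambda)(st^{-1}(E))$ is a measure extending $\mu'$ on the generating intervals (with the value $\infty$ on any set containing $\pm\infty$), so by the uniqueness clause of Lemma \ref{unique} it equals $\mu'$ on $\mathfrak{D}'$. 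To pass to the completion $\mathfrak{B}'$, I would use the standard fact (see \cite{cut}) that if $N\in\mathfrak{D}'$ has $\mu'(N)=0$ then $st^{-1}(N)$ has $L(\lambda)$-outer measure zero, hence every $\mu'$-null subset pulls back into the completed Loeb $\sigma$-algebra; since $L(\lambda)$ is complete, $st^{-1}(B)\in L(\mathfrak{C})$ for every $B\in\mathfrak{B}'$ and the measure identity persists. This gives the first assertion: $st$ is measurable and measure preserving.

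For the integrability and the integral identities, I would invoke the change-of-variables/abstract transformation theorem for a measure-preserving map: since $st$ pushes $L(\lambda)$ forward to $\mu'$, for any $\mu'$-measurable $h\geq 0$ on $\mathcal{R}^{+-\infty}$ one has $\int_{\overline{\mathcal R}_{\omega,\eta}} (h\circ st)\,dL(\lambda)=\int_{\mathcal{R}^{+-\infty}} h\,d\mu'$, with one side finite iff the other is, and then split into real and imaginary, positive and negative parts for complex-valued $h$. Applying this with $h=g_\infty$ and $h=\hat g_\infty$: these are bounded (indeed rapidly decreasing) continuous functions vanishing at $\pm\infty$, so $st^*(g_\infty)=g_\infty\circ st$ and $st^*(\hat g_\infty)=\hat g_\infty\circ st$ are well-defined $L(\lambda)$-measurable functions. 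Since $g\in S(\mathcal{R})$ is absolutely integrable and $\mu'(\{\pm\infty\})$ multiplies a zero integrand, $\int_{\mathcal{R}^{+-\infty}} g_\infty\,d\mu'=\int_{\mathcal{R}} g\,d\mu<\infty$; the same for $\hat g$ using Remark \ref{remark,schwartz} that $\hat g\in S(\mathcal{R})$. Chaining the two equalities yields the displayed formulas.

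The main obstacle I anticipate is the verification at the two added points $\pm\infty$ and the accompanying outer-measure-zero argument for the completion: one must check carefully that $st^{-1}$ of a $\mu'$-null set is Loeb-null (this is where the hypothesis that $g$, $\hat g$ decay, so that the integrals genuinely ignore the infinite part, is implicitly used), and that the $\sigma$-additivity of the pushforward on $\mathfrak{D}'$ together with the hyperfiniteness of $(\overline{\mathcal R}_{\omega,\eta},\mathfrak{C},\lambda)$ really forces agreement with $\mu'$ rather than some other extension — this is precisely why Lemma \ref{unique} was isolated beforehand, and why the infinite-measure uniqueness of the Loeb extension (the Henson result cited in footnote \ref{henson}) is needed. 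Everything else is the routine Loeb-measure transfer argument.
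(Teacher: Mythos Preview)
Your overall strategy matches the paper's: verify $st^{-1}$ behaves correctly on half-open intervals and on $\{\pm\infty\}$, pass to the generated $\sigma$-algebra $\mathfrak{D}'$, invoke the uniqueness in Lemma~\ref{unique}, extend to the completion $\mathfrak{B}'$, and then apply change of variables (the paper's Lemma~\ref{cov}) together with the convention $0\cdot\infty=0$ for the integral identities. Two technical points need repair, however. First, your sandwich $A_-\subseteq st^{-1}([a,b))\subseteq A_+$ fails as written: an interval $[\tfrac{i}{\eta},\tfrac{i+1}{\eta})$ with $\tfrac{i+1}{\eta}<b$ but $\tfrac{i+1}{\eta}\simeq b$ lies in $A_-$ yet contains points with standard part $b\notin[a,b)$; symmetrically, a point $x$ with $st(x)=a$ may sit in an interval lying entirely to the left of $a$ and hence outside $A_+$. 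The paper avoids this by writing
\[
st^{-1}([a,b))=\bigcup_{m=1}^{\infty}\bigcap_{n=1}^{\infty}\Bigl[\tfrac{[\eta(a-1/n)]}{\eta},\tfrac{[\eta(b-1/m)]}{\eta}\Bigr)
\]
as an explicit countable combination of internal sets, which simultaneously establishes Loeb measurability and yields the measure via monotone limits. Second, $st^{-1}(\{+\infty\})$ is \emph{not} internal (the set of positive infinite elements is external); it is Loeb measurable because it equals the countable intersection $\bigcap_{n\in\mathcal{N}}[n,\tfrac{\omega}{\eta})$ of internal sets, and the paper reaches it via sets of the form $st^{-1}([-\infty,a))$ and complements. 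With these two corrections your argument goes through and coincides with the paper's.
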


\begin{proof}
We let $\Sigma_{0}'\subset\mathfrak{B}'$ denote the sets consisting of finite unions of the form;\\

$[-\infty,b_{1})\cup[a_{2},b_{2})\cup\ldots\cup[a_{r},b_{r})\cup [b_{r+1},\infty]$\\

where $b_{1}\leq a_{2}\ldots\leq b_{r+1}$ belong to $\mathcal{R}$. It is an easy exercise to check that $\Sigma_{0}$ is an algebra. Let $\mathfrak{D}'\subset\mathfrak{B}'$ be the $\sigma$-algebra generated by $\Sigma_{0}'$. Then $\mathfrak{D}'|_{\mathcal{R}}$ is just the Borel field $\mathfrak{D}$ on $\mathcal{R}$, and by Lemma \ref{unique}, $\mathfrak{D}'$ is obtained from $\mathfrak{D}$ by adjoining at least one of the points $\{+\infty,-\infty\}$. Then $\mathfrak{B}'$ is just the completion of $\mathfrak{D}'$ with respect to $\mu'|_{\mathfrak{D}'}$, using the definition of $\mathfrak{B}$ and the fact that $\mathfrak{B}'|_{\mathcal{R}}=\mathfrak{B}$. Now, if $a,b\in{\mathcal{R}}$;\\

$st^{-1}([a,b))=\bigcup_{m=1}^{\infty}\bigcap_{n=1}^{\infty}[{[\eta(a-{1\over n})]\over\eta},{[\eta(b-{1\over m})]\over\eta})$\\

$st^{-1}([-\infty,a))=\bigcup_{m=1}^{\infty}[{-\omega\over\eta},{[\eta(a-{1\over m})]\over\eta})$ $(*)$\\

where $[\ ]$ denotes integer part. Observing that $\{i\in{^{*}{\mathcal Z}}:-\omega\leq i\leq [\eta(b-{1\over m})]-1\}$ is internal, these sets belong to $L(\mathfrak{C})$. Now consider $\{B\in\mathfrak{B}':st^{-1}(B)\in L(\mathfrak{C})\}$. This is a $\sigma$-algebra containing $\mathfrak{D}'$ by $(*)$. In particular, $st^{-1}(-\infty)$ and $st^{-1}(+\infty)$ belong to $L(\mathfrak{C})$. Moreover;\\

$L(\lambda)(st^{-1}([a,b)))=lim_{m\rightarrow\infty}lim_{n\rightarrow\infty}{^{\circ}(b-a+{1\over n}-{1\over m})}=(b-a)$\\

$L(\lambda)(st^{-1}(+\infty))=L(\lambda)(st^{-1}(-\infty))=\infty$ $(**)$\\

In the first claim, we have used elementary properties of measures on $\sigma$-algebras and the definition of $\lambda|\mathfrak{C}$. In the second claim, we have used the fact that $st^{-1}(+\infty)\supset [{\omega\over 2\eta},{\omega\over\eta})$, and $L(\lambda)([{\omega\over 2\eta},{\omega\over\eta}))={^{\circ}({\omega\over 2\eta})}=\infty$, by the choice of $\omega$. Similarly, for $st^{-1}(-\infty)$. It follows that the push forward measure $st_{*}(L(\lambda))$ on $\mathfrak{D}'$, agrees with $\mu$ on the algebra $\Sigma_{0}|\mathcal{R}$, hence, by footnote \ref{uniquelebesgue}, it agrees with $\mu$ on $\mathfrak{D}=\mathfrak{D'}|_{\mathcal{R}}$. By Lemma \ref{unique}, it agrees with $\mu'$ on $\mathfrak{D}'$. Now if $B\in\mathfrak{B}'$, we can find $C\subset B\subset D$, with $C$ and $D$ belonging to $\mathfrak{D}'$, such that $\mu'({D\setminus C})=0$. Then $st^{-1}(C)\subset st^{-1}(B)\subset st^{-1}(D)$ and $L(\lambda)(st^{-1}({D\setminus C})=L(\lambda)({st^{-1}(D)\setminus st^{-1}(C)})=0$. Hence, as $(\overline{\mathcal R}_{\omega,\eta},L(\mathfrak{C}),L(\lambda))$ is complete, we have that $st^{-1}(B)\in L(\mathfrak{C})$ and $L(\lambda)(st^{-1}(B))=L(\lambda)(st^{-1}C)=\mu'(C)=\mu'(B)$, as required. For the second part of the theorem, observe that $S(\mathcal{R})\subset L^{1}(\mathcal{R})$ and use Remarks \ref{remark,schwartz}. Clearly, the extensions $\{g_{\infty},\hat{g}_{\infty}\}$ are $\mathfrak{D}'$-measurable. Using \cite{Rud}(Definition 1.23), and Lemma \ref{extension};\\

$\int_{\mathcal{R}^{+-\infty}} g_{\infty}d \mu'=\int_{\mathcal{R}} g_{\infty}d \mu' +\int_{\{+\infty,-\infty\}} g_{\infty}d \mu'=\int_{\mathcal{R}} g d \mu$\\

and, similarly, for $\hat{g}$. Then, it follows, using the first part of the Theorem, and Lemma \ref{cov}, that, $\{st^{*}(g_{\infty}),st^{*}(\hat{g}_{\infty})\}$ are integrable with respect to $L(\lambda)$, and;\\

$\int_{\overline{\mathcal R}_{\omega,\eta}}st^{*}(g_{\infty})dL(\lambda)=\int_{\mathcal{R}^{+-\infty}} g_{\infty}d \mu'$\\

and, similarly, for $st^{*}(\hat{g}_{\infty})$.

 \end{proof}

We make the following;\\

\begin{defn}
\label{character}

Let $(G,+,0)$ be a finite commutative group, and let $(\mathcal{C}^{*},\centerdot,1)$ denote the multiplicative group of complex numbers, with absolute value $1$, then by a a character $\gamma$ of $G$, we mean a homomorphism $\gamma:G\rightarrow\mathcal{C}^{*}$.\\

Let $m,n\in{\mathcal N}_{>0}$. We let $(\mathcal{Z}_{m},+,0)=({\mathcal{Z}/{m\mathcal{Z}}},+,0)$ denote the additive group of integers $mod$ $m$. For $x,y\in\mathcal{Z}_{m}$, we let $xy$ denote ordinary multiplication in $\mathcal{Z}$, where $\{x,y\}$ are uniquely represented in $\{0,\ldots,m-1\}$\\

$G_{2m}=\{-m,-(m-1),\ldots,m-1\}$ denotes the group of order $2m$, with addition given by $m_{1}+m_{2}=S^{m_{1}}(m_{2})$, where $S$ is the shift map $S(x)=x+1$ if $x\neq m-1$, $S(m-1)=-m$.\\

$G_{m,n}=\{{-m\over n},{-(m-1)\over n},\ldots,{m-1\over n}\}$ denotes the group of order $2m$, with addition as defined for $G_{m}$. As before, for $x\in G_{m,n}$,$y\in G_{m,n}$ or $y\in{\mathcal Z}$,  we let $xy$ denote ordinary multiplication in $\mathcal{Z}$. \\

For a finite commutative group $G$, we let $\mathfrak{G}$ denote the finite $\sigma$-algebra consisting of all subsets of $G$, and $\mu_{G}$ the associated probability measure. $L^{1}(G)$ denotes the set of functions $g:G\rightarrow\mathcal{C}$. For $g,h\in L^{1}(G)$, we let $<g,h>=\int_{G} g{\bar h} d\mu_{G}$.

\end{defn}

The following can be found in \cite{lux};\\

\begin{theorem}
\label{charbasis}
Let $(G,+,0)$ be a finite commutative group of order $m$, then there exist exactly $m$ characters on $G$, and they form an orthonormal basis of $L^{1}(G)$, with respect to $<,>$, (\footnote{\label{orthbasis}It is shown in \cite{lux} that the characters form an orthogonal basis with respect to the measure $m\mu_{G}$. However, it is then a simple computation, using the definition of a character, to see that they are an orthonormal basis with respect to the probability measure $\mu_{G}$}). The characters on $\mathcal{Z}_{m}$ are given by;\\

$\gamma_{k}(x)=exp({2\pi i\over m} kx)$ for $k\in\{0,1,\ldots,m-1\}$\\

\end{theorem}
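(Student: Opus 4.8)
\textbf{Proof proposal for Theorem \ref{charbasis}.}

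The plan is to prove this in three stages, following the structure of the standard character theory for finite abelian groups while staying close to the presentation in \cite{lux}. First I would establish that the set $\widehat{G}$ of characters of $G$ is itself a finite commutative group under pointwise multiplication (with the trivial character as identity and $\gamma^{-1}=\bar\gamma$), and that it has the same order as $G$. The cleanest route is the structure theorem: write $G$ as a finite product of cyclic groups $\mathcal{Z}_{m_1}\times\cdots\times\mathcal{Z}_{m_r}$ with $m_1\cdots m_r=m$, observe that a character of a product is determined by its restrictions to the factors, and that the characters of $\mathcal{Z}_{m_j}$ are precisely the $m_j$ maps $x\mapsto\exp(2\pi i kx/m_j)$ for $k\in\{0,\ldots,m_j-1\}$ — this is checked directly, since a homomorphism $\mathcal{Z}_{m_j}\to\mathcal{C}^{*}$ is determined by the image of the generator $1$, which must be an $m_j$-th root of unity. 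Multiplying out gives exactly $m$ characters on $G$, which simultaneously proves the count and exhibits the explicit formula $\gamma_k(x)=\exp((2\pi i/m)kx)$ in the cyclic case stated at the end.

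Second I would prove orthogonality. The key computational lemma is that for any character $\gamma$ of $G$, $\sum_{x\in G}\gamma(x)$ equals $m$ if $\gamma$ is trivial and $0$ otherwise: if $\gamma$ is nontrivial, pick $g_0$ with $\gamma(g_0)\neq 1$, and note $\gamma(g_0)\sum_{x}\gamma(x)=\sum_x\gamma(g_0+x)=\sum_x\gamma(x)$, forcing the sum to vanish. Applying this to the character $\gamma\bar\delta$ (a character since $\widehat{G}$ is a group) gives $\sum_{x\in G}\gamma(x)\overline{\delta(x)}=m\,\delta_{\gamma,\delta}$, i.e. $\langle\gamma,\delta\rangle_{m\mu_G}=m\,\delta_{\gamma,\delta}$ with respect to the counting-normalized inner product. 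Dividing by $m$ — exactly the rescaling flagged in footnote \ref{orthbasis} — yields $\langle\gamma,\delta\rangle=\delta_{\gamma,\delta}$ for the probability measure $\mu_G$, so the $m$ characters are orthonormal in $L^{1}(G)$ under $\langle\,,\rangle$.

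Third, since $L^{1}(G)$ is the space of all complex functions on the $m$-element set $G$, it is an $m$-dimensional complex vector space, and any orthonormal set of size $m$ is automatically a basis — no further spanning argument is needed. Finally, for the last sentence of the statement: the maps $\gamma_k(x)=\exp((2\pi i/m)kx)$, $k\in\{0,\ldots,m-1\}$, are visibly homomorphisms $\mathcal{Z}_m\to\mathcal{C}^{*}$, they are pairwise distinct (their values at $x=1$ are distinct $m$-th roots of unity), and there are exactly $m$ of them, so by the count they exhaust $\widehat{\mathcal{Z}_m}$.

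I expect the main obstacle to be purely expository rather than mathematical: deciding how much of the structure theorem for finite abelian groups to invoke versus cite. One can sidestep the structure theorem entirely by proving $|\widehat{G}|=|G|$ through the double-duality/induction-on-$|G|$ argument (splitting off a cyclic subgroup and extending characters), but that is longer; since the paper explicitly defers to \cite{lux} for this theorem, the honest move is to cite \cite{lux} for the group-theoretic input and only carry out the short orthogonality computation and the normalization bookkeeping of footnote \ref{orthbasis} in detail. The one genuine point requiring care is the normalization: making sure the passage from the $m\mu_G$-orthogonality proved in \cite{lux} to $\mu_G$-orthonormality is stated correctly, since $\langle\gamma,\gamma\rangle_{m\mu_G}=m$ becomes $\langle\gamma,\gamma\rangle_{\mu_G}=1$, which is what is claimed.
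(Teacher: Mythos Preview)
Your proposal is mathematically correct and follows the standard route through character theory. However, the paper does not actually prove Theorem \ref{charbasis}: it is introduced by ``The following can be found in \cite{lux}'' and stated without proof, the only additional content being the normalization remark in footnote \ref{orthbasis}. So there is nothing to compare against; you are supplying a proof where the paper simply cites one. Your own closing paragraph already anticipates this --- citing \cite{lux} for the group-theoretic input and carrying out only the orthogonality/normalization computation is exactly what the paper does, and is the appropriate level of detail here.
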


\begin{defn}
\label{chartransf}
Let $(G,+,0)$ be a finite commutative group of order $m$, and let $G_{*}$ denote its commutative group of characters, of order $m$, (\footnote{\label{isomo} In fact, $G$ and $G_{*}$ are isomorphic, see \cite{lux}.}), then, if $g\in L^{1}(G)$, we define $\hat{g}:G_{*}\rightarrow\mathcal{C}$, by;\\

$\hat{g}(\gamma)=<g,\gamma>=\int_{G} g{\bar\gamma} d\mu_{G}$\\

\end{defn}

We then obtain;\\

\begin{theorem}{Inversion Theorem for Finite Groups}\\
\label{fgit}

Let $\{G,G_{*},g,\hat{g}\}$ be as in Definition \ref{chartransf}, then;\\

$g(x)=\sum_{j=0}^{m-1}\hat{g}(\gamma_{j})\gamma_{j}(x)$\\

where $x\in G$, and $j$ enumerates $G_{*}$.

\end{theorem}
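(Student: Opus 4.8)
The plan is to derive this directly from Theorem \ref{charbasis}, which tells us that the $m$ characters $\{\gamma_0,\ldots,\gamma_{m-1}\}$ form an orthonormal basis of $L^1(G)$ with respect to $\langle\cdot,\cdot\rangle$. Since $L^1(G)$ is an $m$-dimensional complex inner product space and we are handed an orthonormal family of exactly $m$ vectors in it, this family spans, so every $g\in L^1(G)$ admits a (unique) expansion $g=\sum_{j=0}^{m-1}c_j\gamma_j$ for suitable scalars $c_j\in\mathcal{C}$, with $j$ enumerating $G_*$.

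The first step is to identify the coefficients. Taking the inner product of both sides of $g=\sum_{j=0}^{m-1}c_j\gamma_j$ with $\gamma_k$ and using orthonormality, $\langle g,\gamma_k\rangle=\sum_{j=0}^{m-1}c_j\langle\gamma_j,\gamma_k\rangle=c_k$. By Definition \ref{chartransf}, $\langle g,\gamma_k\rangle=\hat g(\gamma_k)$, and hence $c_k=\hat g(\gamma_k)$ for each $k\in\{0,\ldots,m-1\}$. This yields the identity $g=\sum_{j=0}^{m-1}\hat g(\gamma_j)\gamma_j$ as elements of $L^1(G)$.

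The second step is merely to evaluate this identity at a point $x\in G$. Since $\mu_G$ is the uniform probability measure on the finite set $G$, every singleton has positive mass, so equality in $L^1(G)$ is genuine pointwise equality; therefore $g(x)=\sum_{j=0}^{m-1}\hat g(\gamma_j)\gamma_j(x)$, which is the assertion of the theorem.

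There is essentially no obstacle here: the entire content is packaged in Theorem \ref{charbasis}, and what remains is the standard Fourier expansion of a vector in terms of an orthonormal basis. The only point worth remarking is the finiteness of the sum, which is automatic from $\dim_{\mathcal{C}}L^1(G)=m<\infty$, so no convergence questions arise and the abstract Hilbert-space expansion collapses to the displayed finite linear combination.
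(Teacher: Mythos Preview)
Your proof is correct and follows essentially the same route as the paper: expand $g$ in the orthonormal basis of characters given by Theorem \ref{charbasis}, identify the coefficients as $\hat g(\gamma_j)$ via Definition \ref{chartransf}, and pass from equality in $L^1(G)$ to pointwise equality using that each singleton has positive $\mu_G$-mass. The only difference is that you spell out the coefficient computation $c_k=\langle g,\gamma_k\rangle$ explicitly, whereas the paper states the expansion $g=\sum_j\langle g,\gamma_j\rangle\gamma_j$ directly.
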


\begin{proof}
This is almost immediate. By Theorem \ref{charbasis};\\

$g=\sum_{j=0}^{m-1}<g,\gamma_{j}>\gamma_{j}$ in $L^{1}(G)$\\

Then, by Definition \ref{chartransf}, and the fact that $\mu_{G}(x)>0$, if $x\in G$;\\

$g(x)=\sum_{j=0}^{m-1}<g,\gamma_{j}>\gamma_{j}(x)=\sum_{j=0}^{m-1}\hat{g}(\gamma_{j})\gamma_{j}(x)$\\

\end{proof}

We now compute the character group on $G_{m,n}$;\\

\begin{lemma}
\label{partichar}
Let $G_{m,n}$ be as in Definition \ref{character}, then the characters on $G_{m,n}$ are given by;\\

$\gamma_{y}(x)=exp({\pi i n^{2}\over m} xy)$\\

where $x,y\in G_{m,n}$.

\end{lemma}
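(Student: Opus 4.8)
The plan is to transport the known description of the characters on $\mathcal{Z}_{2m}$ (Theorem \ref{charbasis}) across the natural bijection between $G_{m,n}$ and $\mathcal{Z}_{2m}$, and then track what the exponential formula becomes under this identification. Concretely, the group $G_{m,n}=\{-m/n,\ldots,(m-1)/n\}$ has order $2m$, with addition defined via the shift map exactly as for $G_{2m}$; so the map $\phi:\mathcal{Z}_{2m}\rightarrow G_{m,n}$ sending the residue class of $j\in\{0,\ldots,2m-1\}$ to the element $j/n$ if $j\leq m-1$ and to $(j-2m)/n$ if $j\geq m$ is a group isomorphism. Every character of $G_{m,n}$ is then of the form $\delta\circ\phi^{-1}$ for a unique character $\delta$ of $\mathcal{Z}_{2m}$, and by Theorem \ref{charbasis} those are $\gamma_k(j)=\exp\!\big(\tfrac{2\pi i}{2m}kj\big)=\exp\!\big(\tfrac{\pi i}{m}kj\big)$ for $k\in\{0,\ldots,2m-1\}$.

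Next I would re-index. Writing $x=\phi(j)$ and $y=\phi(k)$, so that (as integers, up to multiples of $2m$) $j=nx$ and $k=ny$ — this is where the convention "$xy$ denotes ordinary multiplication in $\mathcal{Z}$" from Definition \ref{character} is used — the character attached to $y$ evaluated at $x$ becomes
\[
\gamma_y(x)=\exp\!\Big(\frac{\pi i}{m}\,(nx)(ny)\Big)=\exp\!\Big(\frac{\pi i n^2}{m}\,xy\Big),
\]
which is exactly the claimed formula. So the lemma is really just a change of variables inside the exponent.

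The one genuine point to check — and the main (if modest) obstacle — is well-definedness: since $j=nx$ and $k=ny$ are only determined modulo $2m$, I must verify that $\exp\big(\tfrac{\pi i n^2}{m}xy\big)$ does not depend on these choices, i.e. that replacing $nx$ by $nx+2m$ or $ny$ by $ny+2m$ leaves the value unchanged. Replacing $nx$ by $nx+2m$ multiplies the exponent by $\exp\big(\tfrac{\pi i n^2}{m}\cdot\tfrac{2m}{n}y\big)=\exp(2\pi i n y)$; since $ny$ is an integer this is $1$, and symmetrically for the other variable. I should also note that the $2m$ functions $\{\gamma_y:y\in G_{m,n}\}$ so obtained are pairwise distinct (they correspond to the distinct $\gamma_k$ on $\mathcal{Z}_{2m}$), hence by Theorem \ref{charbasis} they exhaust the character group of $G_{m,n}$. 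Everything else is routine: that $\gamma_y$ is a homomorphism is inherited from $\gamma_k$ on $\mathcal{Z}_{2m}$ via the isomorphism $\phi$, and no further computation is needed.
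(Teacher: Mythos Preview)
Your proof is correct and follows essentially the same approach as the paper: transport the characters of $\mathcal{Z}_{2m}$ across the natural isomorphism with $G_{m,n}$ and then re-index the character group by elements of $G_{m,n}$ itself. The only cosmetic difference is that the paper factors the isomorphism through the intermediate group $G_{2m}$ (first computing the characters on $G_{2m}$ as $\gamma_y(x)=\exp(\tfrac{\pi i}{m}xy)$ for $x,y\in G_{2m}$, then scaling by $n$), whereas you go from $\mathcal{Z}_{2m}$ to $G_{m,n}$ in a single step and replace the paper's case-by-case re-indexing with your well-definedness check modulo $2m$.
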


\begin{proof}
First observe that there exists an isomorphism $\phi:G_{m}\rightarrow\mathcal{Z}_{2m}$, defined by $\phi(x)=(x+2m)_{mod 2m}$. Hence, by Theorem \ref{charbasis}, the characters on $G_{m}$ are given by;\\

$exp({2\pi i\over 2m}(x+2m)_{mod 2m}j)=exp({\pi i\over m}(x+2m)_{mod 2m}j)=exp({\pi i\over m} xj)$\\

where $x\in G_{m}$, $j\in\{0,1,\ldots,2m-1\}$. Here, we have also used the facts that;\\

${[x+2m]_{mod 2m}\over m}={x\over m}$, if $0\leq x\leq m-1$\\

${[x+2m]_{mod 2m}\over m}={x\over m}+2$, if $-m\leq x<0$\\

and $exp(2\pi i)=1$. Now writing $j=y+m$, for $y\in G_{m}$, we obtain that;\\

$exp({\pi i\over m} xj)=-exp({\pi i\over m}y)=exp({\pi i\over m}(y-m))$\\

Observe that the characters $exp({\pi i\over m}(y-m))$ correspond to $exp({\pi i\over m}y')$, where $y'=y-m$ belongs to $\{-m,\ldots,-1\}$ if $y\in\{0,\ldots,m-1\}$, and correspond to $exp({\pi i\over m}y'')$, where $y''=y+m$ belongs to $\{0,\ldots,m-1\}$ if $y\in\{-m,\ldots,-1\}$. Hence, the characters in ${G_{m}}_{*}$ are given by;\\

$\gamma_{y}(x)=exp({\pi i\over m} xy)$ $(*)$\\

for $x,y\in G_{m}$. Now observe there exists an isomorphism $\psi:G_{m,n}\rightarrow G_{m}$ defined by $\psi(x)=nx$. Hence, by $(*)$, the characters in ${G_{m,n}}_{*}$ are given by;\\

$\gamma_{y}(x)=exp({\pi i\over m} (nx)(ny))=exp({\pi i n^{2}\over m}xy)$\\

for $x,y\in G_{m,n}$.
\end{proof}

\begin{defn}
\label{partictransf}

Let $n\in{\mathcal N}_{>0}$, let $G_{n^{2},n}$ be the group of order $2n^{2}$, as in Definition \ref{character}, and let $g\in L^{1}(G_{n^{2},n})$. Let $\mathfrak{G}$ be as before, and let $\lambda_{G}$ be the rescaled measure, given by $\lambda_{G}=2n\mu_{G}$. Then, we define $\hat{g}\in L^{1}(G_{n^{2},n})$ to be the function;\\

$\hat{g}(t)=\int_{G_{n^{2},n}} g(x)exp(-\pi ixt) d\lambda_{G}$ $(t\in G_{n^{2},n},x\in G_{n^{2},n})$\\

\end{defn}

\begin{theorem}{Inversion Theorem for $G_{n^{2},n}$}\\
\label{pgit}

Let $\{G_{n^{2},n},\lambda_{G},g,\hat{g}\}$ be as in Definition \ref{partictransf}, then;\\

$g(x)={1\over 2}\int_{G_{n^{2},n}}\hat{g}(t)exp(\pi ixt) d\lambda_{G}$ $(x\in G_{n^{2},n})$\\

\end{theorem}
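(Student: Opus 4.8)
The plan is to deduce this rescaled statement directly from the Inversion Theorem for finite groups (Theorem \ref{fgit}), applied to $G=G_{n^{2},n}$, by carefully tracking the rescaling factor $2n$ that distinguishes $\lambda_{G}=2n\mu_{G}$ from the probability measure $\mu_{G}$. First I would unwind Definition \ref{partictransf}: since $\lambda_{G}=2n\mu_{G}$ and $|G_{n^{2},n}|=2n^{2}$, for any $t$ we have $\hat g(t)=\int_{G_{n^{2},n}}g(x)\exp(-\pi ixt)\,d\lambda_{G}=2n\int_{G_{n^{2},n}}g(x)\exp(-\pi ixt)\,d\mu_{G}=2n\langle g,\gamma_{t}\rangle$, where, by Lemma \ref{partichar}, $\gamma_{t}(x)=\exp(\tfrac{\pi i n^{2}}{m}xt)$ with $m=n^{2}$, so that $\gamma_{t}(x)=\exp(\pi i x t)$ exactly. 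Hence $\overline{\gamma_{t}}(x)=\exp(-\pi ixt)$, and the integrand in Definition \ref{partictransf} really is $g\overline{\gamma_{t}}$, confirming $\hat g(t)=2n\,\widehat g_{\mathrm{char}}(\gamma_{t})$ where $\widehat g_{\mathrm{char}}$ is the character transform of Definition \ref{chartransf}.

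Next I would invoke Theorem \ref{fgit}: enumerating $G_{n^{2},n}$ as $t_{0},\ldots,t_{m-1}$ (equivalently, enumerating the character group via $y\mapsto\gamma_{y}$, which is legitimate since Lemma \ref{partichar} shows $y\mapsto\gamma_{y}$ parametrizes all characters), we get $g(x)=\sum_{t\in G_{n^{2},n}}\widehat g_{\mathrm{char}}(\gamma_{t})\gamma_{t}(x)=\sum_{t\in G_{n^{2},n}}\tfrac{1}{2n}\hat g(t)\exp(\pi ixt)$. The final step is to recognize the sum $\tfrac{1}{2n}\sum_{t\in G_{n^{2},n}}(\cdot)$ as an integral against $\lambda_{G}$: since $\mu_{G}$ is the probability measure, $\int_{G_{n^{2},n}}f\,d\mu_{G}=\tfrac{1}{2n^{2}}\sum_{t}f(t)$, so $\int_{G_{n^{2},n}}f\,d\lambda_{G}=2n\cdot\tfrac{1}{2n^{2}}\sum_{t}f(t)=\tfrac{1}{n}\sum_{t}f(t)$. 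Applying this with $f(t)=\hat g(t)\exp(\pi ixt)$ gives $\tfrac{1}{2}\int_{G_{n^{2},n}}\hat g(t)\exp(\pi ixt)\,d\lambda_{G}=\tfrac{1}{2n}\sum_{t}\hat g(t)\exp(\pi ixt)=g(x)$, which is exactly the claim.

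The main obstacle, such as it is, is bookkeeping rather than genuine difficulty: one must make sure the two occurrences of the rescaling factor $2n$ — once in passing from $\mu_{G}$ to $\lambda_{G}$ in the definition of $\hat g$, and once in passing back when reconstituting $g$ — combine to produce precisely the factor $\tfrac12$ in the statement, with the group order $m=n^{2}$ entering so that $\gamma_{y}(x)=\exp(\pi ixy)$ has no leftover $n$-dependence. I would therefore present the computation by writing the total scaling explicitly: the forward transform carries a factor $2n$, the inversion sum over the $2n^{2}$ group elements against $d\lambda_{G}$ carries $\tfrac{1}{2n^{2}}\cdot 2n=\tfrac1n$, and $2n\cdot\tfrac1n\cdot\tfrac12=1$, accounting for the $\tfrac12$. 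A secondary point worth a sentence is that the enumeration ``$j$ enumerates $G_{*}$'' in Theorem \ref{fgit} is matched here by the enumeration of $G_{n^{2},n}$ itself via the isomorphism $G_{n^{2},n}\cong (G_{n^{2},n})_{*}$ implicit in Lemma \ref{partichar} (cf.\ footnote \ref{isomo}), so summing over $t\in G_{n^{2},n}$ is the same as summing over all characters. With these identifications in place the proof is a one-line substitution.
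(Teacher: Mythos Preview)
Your proposal is correct and follows essentially the same route as the paper: identify the characters of $G_{n^{2},n}$ via Lemma~\ref{partichar} as $\gamma_{t}(x)=\exp(\pi ixt)$, apply the finite-group inversion Theorem~\ref{fgit}, and then rescale from $\mu_{G}$ to $\lambda_{G}=2n\mu_{G}$ to produce the factor $\tfrac12$. The paper merely writes the computation out as explicit double sums over the index set $\{-n^{2},\ldots,n^{2}-1\}$ rather than tracking the scaling factor abstractly as you do, but the argument is the same.
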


\begin{proof}

By Lemma \ref{partichar}, the characters on $G_{n^{2},n}$ are given by;\\

$\gamma_{y}(x)=exp({\pi i n^{2}\over n^{2}}xy)=exp(\pi ixy)$ $(*)$\\

for $x,y\in G_{n^{2},n}$. Using Definition \ref{chartransf}, and the fact that $\mu_{G}(x)={1\over 2n^{2}}$, for $x\in G_{n^{2},n}$, we have;\\

$\hat{g}(\gamma_{y})={1\over 2 n^{2}}\sum_{k=-n^{2}}^{n^{2}-1}g({k\over n})exp(-\pi i{k\over n}y)$ $(**)$\\

where $y\in G_{n^{2},n}$. By Theorem \ref{fgit}, $(*),(**)$ and the fact that $\lambda_{G}(x)={1\over n}$, for $x\in G_{n^{2},n}$;\\

$g(x)=\sum_{l=-n^{2}}^{n^{2}-1}\hat{g}(\gamma_{{l\over n}})\gamma_{l\over n}(x)$\\

$=\sum_{l=-n^{2}}^{n^{2}-1}\hat{g}(\gamma_{{l\over n}})exp(\pi i{lx\over n})$\\

$=\sum_{l=-n^{2}}^{n^{2}-1}[{1\over 2 n^{2}}\sum_{k=-n^{2}}^{n^{2}-1}g({k\over n})exp(-\pi i{k\over n}{l\over n})]exp(\pi i{lx\over n})$\\

$={1\over 2}{1\over n}\sum_{l=-n^{2}}^{n^{2}-1}[{1\over n}\sum_{k=-n^{2}}^{n^{2}-1}g({k\over n})exp(-\pi i{k\over n}{l\over n})]exp(\pi i{lx\over n})$\\

$={1\over 2}{1\over n}\sum_{l=-n^{2}}^{n^{2}-1}[\int_{G_{n^{2},n}} g(y)exp(-\pi iy{l\over n})d\lambda_{G}]exp(\pi i{lx\over n})$\\

$={1\over 2}{1\over n}\sum_{l=-n^{2}}^{n^{2}-1}\hat{g}({l\over n})exp(\pi i{lx\over n})$\\

$={1\over 2}\int_{G_{n^{2},n}} \hat{g}(t)exp(\pi ixt)d \lambda_{G}$\\

\end{proof}
\begin{defn}
\label{transfdefs}
We let $\overline{{\mathcal R}_{\eta}}={\overline{\mathcal{R}_{{\eta}^{2},\eta}}}$ and let $\{\mathfrak{C}_{\eta},\lambda_{\eta}\}$ be as before. We let $\mathfrak{C}_{\eta}^{2}$ denote the ${^{*}}$-finite algebra on ${\overline{{\mathcal R}_{\eta}}}^{2}$, consisting of internal unions of the form $[{k\over\eta},{k+1\over\eta})\times[{j\over\eta},{j+1\over\eta})$, $-\eta^{2}\leq k,j<\eta^{2}$, and $\lambda_{\eta}^{2}$ be the counting measure on $\mathfrak{C}_{\eta}^{2}$, defined by $\lambda_{\eta}^{2}([{k\over\eta},{k+1\over\eta})\times[{j\over\eta},{j+1\over\eta}))={1\over{\eta^{2}}}$.\\

We let ${^{*}exp(\pi i xt)},{^{*}exp(-\pi i xt)}:{^{*}{\mathcal{R}}}^{2}\rightarrow{^{*}{\mathcal C}}$ be the transfers of the functions $exp(\pi ixt),exp(-\pi ixt):{\mathcal R}^{2}\rightarrow{\mathcal C}$, and use the same notation to denote the restrictions of the transfers to ${\overline{{\mathcal R}_{\eta}}}^{2}$.\\

We let $exp_{\eta}(\pi ixt),exp_{\eta}(-\pi ixt):{\overline{{\mathcal R}_{\eta}}}^{2}\rightarrow{^{*}{\mathcal C}}$ denote their $\mathfrak{C}_{\eta}^{2}$-measurable counterparts, defined by;\\

$exp_{\eta}(\pi ixt)={^{*}exp(\pi i{[\eta x]\over\eta}{[\eta t]\over\eta})}$, $(x,t)\in{\overline{{\mathcal R}_{\eta}}}^{2}$\\

and, similarly, for $exp_{\eta}(-\pi ixt)$. Given $f:\overline{{\mathcal R}_{\eta}}\rightarrow{^{*}\mathcal{C}}$, which is $\mathfrak{C}_{\eta}$-measurable, we define;\\

$\hat{f}_{\eta}(t)=\int_{\overline{{\mathcal R}_{\eta}}}f(x)exp_{\eta}(-\pi ixt) d\lambda_{\eta}$\\

so $\hat{f}_{\eta}:\overline{{\mathcal R}_{\eta}}\rightarrow{^{*}\mathcal{C}}$ is $\mathfrak{C}_{\eta}$-measurable. $(*)$\\

Given $g:{\mathcal R}\rightarrow{\mathcal C}$, we let ${^{*}g}:{^{*}{\mathcal{R}}}\rightarrow{^{*}{\mathcal{C}}}$ denote its transfer and its restriction to $\overline{{\mathcal R}_{\eta}}$. We let $g_{\eta}$ denote its $\mathfrak{C}_{\eta}$-measurable counterpart, as above, and let $\hat{g}_{\eta}$ be as in $(*)$.\\

For $n\in\mathcal{N}$, we let ${\mathcal R}_{n}=\overline{{\mathcal R}_{n}}\cap\mathcal{R}$. We let $\mathfrak{C}_{n,st}$ consist of all finite unions of intervals of the form $[{i\over n},{i+1\over n})$, for $-n^{2}\leq i\leq n^{2}-1$. $\lambda_{n,st}$ is defined on $\mathfrak{C}_{n,st}$, by setting $\lambda_{n}([{i\over n},{i+1\over n}))={1\over n}$.\\

 $\{\mathfrak{C}_{n,st}^{2},\lambda_{n,st}^{2}, exp_{n,st}(\pi i xt), exp_{n,st}(-\pi i xt)\}$ are all defined as above, restricting to $\mathcal{R}$. If $g:{\mathcal R}\rightarrow{\mathcal C}$, we similarly define, $\{g_{n,st},\hat{g}_{n,st}\}$, ($st$ is suggestive notation for standard). Observe that $\lambda_{n,st}$ is just the restriction of Lebesgue measure $\mu$ to $\mathfrak{C}_{n,st}$, and transfers to $\lambda_{n}$.\\

$\{exp_{n,st}(\pi i xt), exp_{n,st}(-\pi i xt),g_{n,st},\hat{g}_{n,st}\}$ are all standard functions, which transfer to $\{exp_{n}(\pi i xt), exp_{n}(-\pi i xt),g_{n},\hat{g}_{n}\}$.\\

Finally, we let $\mathfrak{C}_{n,ext}$ denote the $\sigma$-algebra on $\mathcal{R}$, consisting of countable unions of intervals of the form $[{i\over n},{i+1\over n})$, for $i\in{\mathcal Z}$, and $\lambda_{n,ext}$ be the corresponding measure. We similarly define $\{\mathfrak{C}_{n,ext}^{2},\lambda_{n,ext}^{2},exp_{n,ext}(\pi i xt),\\
 exp_{n,ext}(-\pi i xt)\}$\\

If $g:{\mathcal R}\rightarrow{\mathcal C}$, we let $g_{n,ext}:{\mathcal R}\rightarrow{\mathcal C}$ be the $\mathfrak{C}_{n,ext}$-measurable function obtained by setting $g_{n,ext}(x)=g({[nx]\over n})$, so $g_{n,ext}|_{\mathcal{R}_{n}}=g_{n,st}$.

\end{defn}

We now have;\\

\begin{lemma}{Inversion Theorem for $\overline{{\mathcal R}_{\eta}}$}\\
\label{nsit}

Let $\{\overline{{\mathcal R}_{\eta}},\lambda_{\eta},f,\hat{f}_{\eta}\}$ be as in Definition \ref{transfdefs}, then;\\

$f(x)={1\over 2}\int_{\overline{{\mathcal R}_{\eta}}}\hat{f}_{\eta}(t)exp_{\eta}(\pi ixt) d\lambda_{\eta}(t)$ $(x\in\overline{{\mathcal R}_{\eta}})$\\

\end{lemma}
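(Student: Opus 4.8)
The plan is to obtain Lemma \ref{nsit} from Theorem \ref{pgit} (the Inversion Theorem for $G_{n^2,n}$) by transfer. The key observation is that the underlying set $\overline{{\mathcal R}_{\eta}}=\overline{\mathcal R_{\eta^2,\eta}}$, the algebra $\mathfrak{C}_\eta$, the measure $\lambda_\eta$, and the discretized exponentials $exp_\eta(\pm\pi i xt)$ are, by Definition \ref{transfdefs}, precisely the transfers of their standard counterparts $\mathcal R_{n^2,n}$ (identified with $G_{n^2,n}$ as a point set), $\mathfrak{C}_{n,st}$, $\lambda_{n,st}$, and $exp_{n,st}(\pm\pi i xt)$. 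Likewise $\hat f_\eta$ is the transfer of the standard operation $g\mapsto \hat g_{n,st}$, which on $G_{n^2,n}$ is exactly the transform $\hat g$ of Definition \ref{partictransf} (here one must check that the rescaled measure $\lambda_G=2n\mu_G$ coincides with $\lambda_{n,st}$, since both assign mass $1/n$ to each point $i/n$, and that $\int_{G_{n^2,n}} g(x)exp(-\pi i xt)\,d\lambda_G$ agrees with $\int_{\mathcal R_n} g_{n,st}(x)exp_{n,st}(-\pi i xt)\,d\lambda_{n,st}$, which is immediate since $g_{n,st}$ and $exp_{n,st}$ are already step functions constant on the relevant intervals).

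First I would write down the first-order statement $\Phi(n)$ asserting: for every internal $f:\mathcal R_{n^2,n}\to\mathcal C$ and every $x\in\mathcal R_{n^2,n}$,
\[
f(x)=\tfrac12\int_{\mathcal R_{n^2,n}}\Big(\int_{\mathcal R_{n^2,n}} f(y)\,exp_{n,st}(-\pi i yt)\,d\lambda_{n,st}(y)\Big)exp_{n,st}(\pi i xt)\,d\lambda_{n,st}(t).
\]
Since these integrals are just finite sums over the $2n^2$ points, $\Phi(n)$ is a genuine first-order sentence in the language expanded by the relevant standard functions. By Theorem \ref{pgit} together with the identifications above (and Theorem \ref{fgit} behind it), $\Phi(n)$ holds for every $n\in\mathcal N_{>0}$. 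Hence by transfer $\Phi(\eta)$ holds in ${^*}\mathcal R$ for our fixed infinite $\eta\in{^*}\mathcal N\setminus\mathcal N$; but $\Phi(\eta)$ is, term for term, the statement of Lemma \ref{nsit}, taking $f$ to be the given $\mathfrak C_\eta$-measurable function (which is internal, being a restriction of a $*$-finite object). This yields the claimed identity for all $x\in\overline{{\mathcal R}_{\eta}}$.

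The main obstacle, and the only point requiring genuine care, is the bookkeeping that makes the transfer legitimate: one must verify that all four ingredients —- the index set, the algebra/measure, the kernel, and the transform operator -— are definable by the \emph{same} formula at every finite $n$ and at $\eta$, so that a single sentence transfers. Concretely, this means checking (i) that $\hat f_\eta(t)=\int_{\overline{\mathcal R_\eta}} f(x)exp_\eta(-\pi i xt)\,d\lambda_\eta$ is the transfer of $\hat g_{n,st}$, which is exactly the content of the last lines of Definition \ref{transfdefs}; and (ii) that the standard sum appearing in $\Phi(n)$ really is the double integral in Theorem \ref{pgit}, i.e. matching $\lambda_G$ with $\lambda_{n,st}$ and $\mu_G$-based transform with the step-function integral, so that $\Phi(n)$ is not merely implied by but literally equals Theorem \ref{pgit} after substitution. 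Once these identifications are in place the transfer is automatic, because internal integrals over $*$-finite sets are $*$-finite sums and the Loeb structure plays no role in this particular lemma -— it is a purely internal (hyperfinite, ``$*$-standard'') statement. No estimate or limiting argument is needed here; the analytic work is deferred to the later passage from $\overline{\mathcal R_\eta}$ back to $\mathcal R$ via the standard part map of Theorem \ref{mmp}.
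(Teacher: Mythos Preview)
Your proposal is correct and follows essentially the same route as the paper: both argue by transfer from Theorem \ref{pgit}, after identifying $(\overline{\mathcal R_\eta},\mathfrak C_\eta,\lambda_\eta,exp_\eta)$ with the transfer of $(G_{n^2,n},\lambda_G,exp)$. The paper's proof is simply terser---it reduces to grid points by noting that both sides are $\mathfrak C_\eta$-measurable (so unchanged on replacing $x$ by $[\eta x]/\eta$) and then invokes transfer and Definition \ref{internint}; your version spells out the same bookkeeping explicitly via the first-order sentence $\Phi(n)$.
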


\begin{proof}
As $f(x)$ is $\mathfrak{C}_{\eta}$-measurable and $exp_{\eta}(\pi ixt)$ is $\mathfrak{C}_{\eta}^{2}$-measurable, both sides of the equation are unchanged if we replace $x$ by ${[\eta x]\over\eta}$. Now the result follows directly, by transfer, from the corresponding result for $G_{n^{2},n}$, Theorem \ref{pgit}, and the definition of the internal integral
$\int_{\overline{{\mathcal R}_{\eta}}}$ on $\overline{{\mathcal R}_{\eta}}$, see Definition \ref{internint},(\footnote{\label{anxious} If the reader is anxious about some ambiguity in transferring double sums or integrals, the important point to realise is that the ${^{*}}$ operator factors through any set of standard predicates or functions, so ${^{*}{\mathcal R}}\models(\forall n\in{^{*}{\mathcal{N}}})({^{*}(S_{1,n}\circ S_{2,n})}=({^{*}S_{1,n}}\circ {^{*}S_{2,n}}))$ if $\{{^{*}S_{1,n}},{^{*}S_{2,n}}\}$ are hyperfinite sums.}).
\end{proof}

We now want to specialise the result of Lemma \ref{nsit} to $(\overline{{\mathcal R}_{\eta}},L(\mathfrak{C}_{\eta}),L(\lambda_{\eta}))$, using Loeb integration theory. The problem now is to obtain the $S$-integrability conditions, see \cite{cut} for a definition of $S$-integrability.

\begin{theorem}
\label{gSinteg}

Let $g\in S({\mathcal R})$, then $g_{\eta}$, as given in Definition \ref{transfdefs}, is $S$-integrable on $\overline{{\mathcal R}_{\eta}}$. Moreover ${^{\circ}{g}}_{\eta}=st^{*}(g_{\infty})$, everywhere $L(\lambda_{\eta})$, and;\\

${^{\circ}\int_{\overline{{\mathcal R}_{\eta}}}g_{\eta} d\lambda_{\eta}}=\int_{\overline{{\mathcal R}_{\eta}}}st^{*}(g_{\infty}) d L(\lambda_{\eta})=\int_{\mathcal{R}} g d\mu$\\
\end{theorem}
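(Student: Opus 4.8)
The plan is to establish the three clauses of the statement in the natural order: first $S$-integrability of $g_\eta$, then the pointwise identification ${}^{\circ}g_\eta = st^{*}(g_\infty)$ a.e., and finally the chain of equalities, which then follows by assembling the first two with Theorem \ref{mmp}.

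\textbf{$S$-integrability.} The key is the rapid decrease built into $g\in S(\mathcal R)$. Since $\sup_{x\in\mathcal R}(1+x^2)|g(x)|=:M<\infty$ is a standard bound, transfer gives ${}^{*}|g(x)|\leq M/(1+x^2)$ for all $x\in{}^{*}\mathcal R$, and by definition of $g_\eta$ the same bound holds for $|g_\eta(x)|$ with $x$ replaced by $[\eta x]/\eta$ (which is within $1/\eta$ of $x$, hence does not disturb the estimate). From this I would derive $S$-integrability directly: for a standard $\varepsilon>0$ one must show $\int_{A}|g_\eta|\,d\lambda_\eta$ is infinitesimal whenever $\lambda_\eta(A)$ is infinitesimal, and that $\int_{\{|g_\eta|\geq k\}\cup(\overline{\mathcal R_\eta}\setminus[-N,N])}|g_\eta|\,d\lambda_\eta$ can be made standardly small. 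The tail part is controlled because $\int_{|x|\geq N}M/(1+x^2)\,dx$ is a convergent standard improper integral tending to $0$, and the hyperfinite Riemann sum over $[{i\over\eta},{i+1\over\eta})$ is within an infinitesimal of it (the integrand is monotone on each tail, so the sum is squeezed between integrals); the ``small on small sets'' part is immediate since $|g_\eta|$ is bounded by the standard number $M$. This is the step I expect to be the main obstacle — not because it is deep, but because one must be careful that the hyperfinite sum $\frac1\eta\sum M/(1+([\eta x]/\eta)^2)$ is genuinely near-standard and near the standard integral, uniformly, rather than merely bounded.

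\textbf{Pointwise identification.} For $L(\lambda_\eta)$-almost every $\tau\in\overline{\mathcal R_\eta}$ the standard part $st(\tau)$ is a finite real number $x$ (the set $st^{-1}(\pm\infty)$ has infinite measure but the integrand vanishes there, and more to the point we only need the equality $L(\lambda_\eta)$-a.e.; actually it holds everywhere once we check the two infinite points). For finite $\tau\simeq x$, continuity of $g$ at $x$ gives ${}^{*}g(\tau)\simeq g(x)$, and since $[\eta\tau]/\eta\simeq\tau\simeq x$ as well, $g_\eta(\tau)={}^{*}g([\eta\tau]/\eta)\simeq g(x)=g_\infty(x)=st^{*}(g_\infty)(\tau)$, so ${}^{\circ}g_\eta(\tau)=st^{*}(g_\infty)(\tau)$. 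For $\tau$ with $st(\tau)=+\infty$, i.e. $\tau$ infinite positive, the decrease bound forces $|g_\eta(\tau)|\leq M/(1+([\eta\tau]/\eta)^2)\simeq 0$, so ${}^{\circ}g_\eta(\tau)=0=g_\infty(+\infty)=st^{*}(g_\infty)(\tau)$, and symmetrically at $-\infty$. Hence the equality holds everywhere, in particular $L(\lambda_\eta)$-a.e.

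\textbf{The integral equalities.} With $g_\eta$ $S$-integrable, the standard Loeb integration theory (see \cite{cut}) gives that ${}^{\circ}g_\eta$ is $L(\lambda_\eta)$-integrable and ${}^{\circ}\!\int_{\overline{\mathcal R_\eta}}g_\eta\,d\lambda_\eta=\int_{\overline{\mathcal R_\eta}}{}^{\circ}g_\eta\,dL(\lambda_\eta)$. By the pointwise identification just established, ${}^{\circ}g_\eta=st^{*}(g_\infty)$ everywhere, so $\int_{\overline{\mathcal R_\eta}}{}^{\circ}g_\eta\,dL(\lambda_\eta)=\int_{\overline{\mathcal R_\eta}}st^{*}(g_\infty)\,dL(\lambda_\eta)$, and the latter equals $\int_{\mathcal R}g\,d\mu$ by Theorem \ref{mmp}. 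Chaining these three identities yields
\[
{}^{\circ}\!\int_{\overline{\mathcal R_\eta}}g_\eta\,d\lambda_\eta=\int_{\overline{\mathcal R_\eta}}st^{*}(g_\infty)\,dL(\lambda_\eta)=\int_{\mathcal R}g\,d\mu,
\]
which is exactly the asserted equation. The only remaining care is to note that $\hat g\in S(\mathcal R)$ as well by Remark \ref{remark,schwartz}, so the same argument applies verbatim to $\hat g_\eta$ should it be needed; but as stated the theorem concerns only $g$, so this completes the proof.
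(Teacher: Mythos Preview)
Your argument is correct, and the pointwise identification and the final assembly via Theorem \ref{sintegspec} and Theorem \ref{mmp} match the paper essentially verbatim. The $S$-integrability step, however, is handled by a genuinely different route. The paper first proves the standard approximation $\|g-g_{n,ext}\|_{L^1}\to 0$, derives uniform smallness of the discretised tails $\int_L^N|g_{n,st}|\,d\lambda_{n,st}$, transfers that first-order statement, and then feeds the resulting internal sequence $s_n=\int_{\overline{\mathcal R_\eta}}|g_\eta-g_\eta\chi_{[-n,n)}|\,d\lambda_\eta$ into the underflow-type Theorem \ref{intlim} before invoking Anderson's criterion (Theorem \ref{andcrit}(ii)). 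You instead transfer the single pointwise bound $|g(x)|\le M/(1+x^2)$ and compare the hyperfinite tail sum with the standard tail integral by monotonicity. Your route is shorter and more transparent for this theorem; the paper's more laborious route has the advantage that it is a template reusable when no direct pointwise bound on the integrand is available, which is exactly the situation in the companion result Theorem \ref{transgSinteg}.

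On that note, your closing parenthetical is misleading and should be dropped: the object $\hat g_\eta$ in Definition \ref{transfdefs} is the \emph{hyperfinite discrete} Fourier transform of $g_\eta$, not the discretisation $(\hat g)_\eta$ of the standard transform $\hat g$. The two do coincide in the limit, but that is essentially the content of Theorem \ref{transgSinteg}, and its proof requires the discrete calculus Lemmas \ref{disccalc}--\ref{tailvanish}; the argument does not ``apply verbatim''. Since you correctly observe that the present theorem concerns only $g$, this does not affect the validity of your proof here.
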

\begin{proof}
We first claim that $g_{n,ext}$ is integrable $\mu$, and $lim_{n\rightarrow\infty}||g-g_{n,ext}||_{L^{1}}=0$, $(*)$.
In order to see this, let $\epsilon>0$ be standard, and choose $N\in{\mathcal N}\geq 2$, such that;\\

$\int_{-\infty}^{\infty}|g| d\mu -\int_{-N}^{N} |g| d\mu<{\epsilon\over 3}$\\

and $N>{9C\over\epsilon}$. As $|g|$ is continuous on the interval $[-N,N]$, by Darboux's theorem, see \cite{bs}, there exists $M\in{\mathcal N}$, such that for all $n\geq M$;\\

$\int_{-N}^{N}(|g-g_{n,ext}|) d\mu < {\epsilon\over 3}$\\

Now, for $n\in{\mathcal N}_{>0}$, using Definition \ref{schwartz};\\

$\int_{|x|>N} |g_{n,ext}|(x) d\mu(x)$\\

$={1\over n}(\sum_{|j|\geq Nn+1}|g({j\over n})|+|g(N)|)$\\

$\leq |{g(N)\over n}| +{1\over n}\sum_{|j|\geq Nn+1} {Cn^{2}\over {j}^{2}}$\\

$\leq {C\over N}+Cn\int_{|x|>Nn}{1\over x^{2}}dx$\\

$={C\over N}+{2Cn\over Nn}={3C\over N}<{\epsilon\over 3}$\\

Combining these estimates, it follows that, $g_{n,ext}$ is integrable $\mu$, and for $n\geq M$;\\

$\int_{-\infty}^{\infty}|g-g_{n,ext}| d\mu < \epsilon$\\

As $\epsilon$ was arbitrary, we obtain the result $(*)$. Now, using $(*)$, choose $N_{1}\in\mathcal{N}$, such that $||g\chi_{[L,N]}||_{L^{1}}<{\epsilon\over 2}$ and $||g-g_{n,ext}||_{L^{1}}<{\epsilon\over 2}$, for all $n\in{\mathcal N}_{>0}$, and $L,N\in\mathcal{Z}$, $LN\geq 0$,with $min(n,|L|,|N|)>N_{1}$. Then;\\

$||g_{n,ext}\chi_{[L,N]}||_{L^{1}}\leq ||(g_{n,ext}-g)\chi_{[L,N]}||_{L^{2}}+||g\chi_{L,N}||_{L^{1}}<\epsilon$ $(**)$\\

for all such $\{n,L,N\}$. We now transfer the result $(**)$. We have that;\\

$\mathcal{R}\models (\forall n_{(n>N_{1})})(\forall L,N_{(LN\geq 0, N_{1}<|L|,|N|<n)})\int_{L}^{N}|g_{n,st}| d\lambda_{n,st} <\epsilon$\\

Hence, the corresponding statement is true in ${^{*}\mathcal{R}}$. In particular, if $\eta$ is infinite, and $\{L,N\}$ are infinite, of the same sign, belonging to $\overline{\mathcal{R}}_{\eta}$, we have that;\\

$\int_{L}^{N}|g_{\eta}| d\lambda_{\eta} < \epsilon$\\

As $\epsilon$ was arbitrary we conclude that;\\

$\int_{L}^{N}|g_{\eta}| d\lambda_{\eta}\simeq 0$ $(***)$\\

for all infinite $\{L,N\}$, of the same sign, in $\overline{\mathcal{R}}_{\eta}$. Now consider the internal sequence;\\

$\{s_{n}\}_{1\leq n\leq\eta}=\{\int_{\overline{\mathcal{R}}_{\eta}}(|g_{\eta}-g_{\eta}\chi_{[-n,n)}|) d\lambda_{\eta}\}_{1\leq n\leq\eta}$\\

Then, by $(***)$, $s_{\omega'}\simeq 0$, for all infinite $\omega'\leq\eta$. Applying Theorem \ref{intlim}, we have that $lim_{n\rightarrow\infty}({^{\circ}s_{n}})=0$. That is;\\

$lim_{n\rightarrow\infty}{^{\circ}(\int_{\overline{\mathcal{R}}_{\eta}}|g_{\eta}-g_{\eta}\chi_{[-n,n)}| d\lambda_{\eta})}=0$ $(\dag)$\\

As $g$ is bounded by $M$, the same is true for $g_{\eta}$, hence, the functions $\{g_{\eta}\chi_{[-n,n)}\}$ are finite, in the sense of Definition \ref{finite}. Applying Theorem \ref{andcrit} and $(\dag)$, we obtain that $g_{\eta}$ is $S$-integrable. Now, using the fact that $lim_{x\rightarrow\infty}g(x)=0$, it is straightforward, using Theorem \ref{stlim}, to show that $g_{\eta}(x)\simeq 0$, for all infinite $x\in{\overline{\mathcal{R}}_{\eta}}$. As $g$ is continuous, by Theorem \ref{nscont}, we have that $g_{\eta}(x)={^{*}g}({[\eta x]\over\eta})\simeq g({^{\circ}x})$, for all finite $x\in{\overline{\mathcal{R}}_{\eta}}$. Hence, for all $x\in{\overline{\mathcal{R}}_{\eta}}$, ${^{\circ}g_{\eta}}(x)=st^{*}(g_{\infty})(x)$. Finally, by Theorem \ref{sintegspec} and Theorem \ref{mmp};\\

${^{\circ}\int_{\overline{{\mathcal R}_{\eta}}}g_{\eta} d\lambda_{\eta}}=\int_{\overline{{\mathcal R}_{\eta}}}st^{*}(g_{\infty}) d L(\lambda_{\eta})=\int_{\mathcal{R}} g d\mu$\\
\end{proof}

The corresponding result for $\hat{g}_{\eta}$ is more difficult to show. We require the following;\\

\begin{defn}
\label{discretederivshift}
If $n\in\mathcal{N}$, and $g_{n,st}$ is $\mathfrak{C}_{n,st}$-measurable, we define the discrete derivative $g'_{n,st}$ by;\\

$g'_{n,st}({j\over n})=n(g_{n,st}({j+1\over n})-g_{n,st}({j\over n}))$ $(-n^{2}\leq j<n^{2}-1)$\\

$g'_{n,st}({n^{2}-1\over n})=0$\\

$g'_{n,st}(x)=g'_{n,st}({[nx]\over n})$   $(x\in\mathcal{R}_{n})$\\

and the shift $g^{sh}_{n,st}$ by;\\

$g^{sh}_{n,st}({j\over n})=g_{n,st}({j+1\over n})$  $(-n^{2}\leq j<n^{2}-1)$\\

$g^{sh}_{n,st}({n^{2}-1\over n})=0$\\

$g^{sh}_{n,st}(x)=g^{sh}_{n,st}({[nx]\over n})$   $(x\in\mathcal{R}_{n})$\\

So both are $\mathfrak{C}_{n,st}$-measurable.

\end{defn}

\begin{lemma}{Discrete Calculus Lemmas}\\
\label{disccalc}

Let $\{g_{n,st},h_{n,st}\}$ be $\mathfrak{C}_{n,st}$-measurable and let $\{g'_{n,st},h'_{n,st},g^{sh}_{n,st},h^{sh}_{n,st}\}$ be as in Definition \ref{discretederivshift}. Then;\\

$(i)$. $\int_{\mathcal{R}_{n}}g'_{n,st} d\lambda_{n,st}=g_{n,st}({n^{2}-1\over n})-g_{n,st}(-n)$\\

$(ii)$. $(g_{n,st}h_{n,st})'=g'_{n,st}h^{sh}_{n,st}+g_{n,st}h'_{n,st}$\\

$(iii)$. $\int_{\mathcal{R}_{n}}g'_{n,st}h_{n,st}d\lambda_{n,st}=-\int_{\mathcal{R}_{n}}g^{sh}_{n,st}h'_{n,st}d\lambda_{n,st}+gh_{n,st}({n^{2}-1\over n})-gh_{n,st}(-n)$\\
\end{lemma}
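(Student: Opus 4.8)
The plan is to prove all three parts by a direct computation on the finite grid $\{j/n : -n^{2}\leq j\leq n^{2}-1\}$, on which a $\mathfrak{C}_{n,st}$-measurable function is just a function of its cell values and on which $\int_{\mathcal{R}_{n}}\cdot\,d\lambda_{n,st}$ is the finite sum $\frac{1}{n}\sum_{j=-n^{2}}^{n^{2}-1}$ of those cell values. No nonstandard machinery is needed; these are purely combinatorial identities, stated here in standard form so that their transfers can be used later.

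For $(i)$, I would unwind $\int_{\mathcal{R}_{n}}g'_{n,st}\,d\lambda_{n,st}=\frac{1}{n}\sum_{j=-n^{2}}^{n^{2}-1}g'_{n,st}(j/n)$, substitute the definition $g'_{n,st}(j/n)=n\bigl(g_{n,st}((j+1)/n)-g_{n,st}(j/n)\bigr)$ for $-n^{2}\leq j<n^{2}-1$ together with the convention $g'_{n,st}((n^{2}-1)/n)=0$, and observe that the sum telescopes, leaving $g_{n,st}((n^{2}-1)/n)-g_{n,st}(-n^{2}/n)=g_{n,st}((n^{2}-1)/n)-g_{n,st}(-n)$. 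For $(ii)$, I would verify the identity pointwise at each grid point $j/n$: for $-n^{2}\leq j<n^{2}-1$ it is the standard Leibniz manipulation, adding and subtracting $g_{n,st}(j/n)h_{n,st}((j+1)/n)$ inside $n\bigl((g_{n,st}h_{n,st})((j+1)/n)-(g_{n,st}h_{n,st})(j/n)\bigr)$; at the right endpoint $j=n^{2}-1$ both sides vanish, since $g'_{n,st}$, $h'_{n,st}$ and $h^{sh}_{n,st}$ are all set to $0$ there, and since $g_{n,st}h_{n,st}$ is itself $\mathfrak{C}_{n,st}$-measurable so that $(g_{n,st}h_{n,st})'$ is defined with the same endpoint convention.

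For $(iii)$, I would first apply $(ii)$ with the roles of $g_{n,st}$ and $h_{n,st}$ exchanged — legitimate since $(g_{n,st}h_{n,st})'=(h_{n,st}g_{n,st})'$ — to obtain $g'_{n,st}h_{n,st}=(g_{n,st}h_{n,st})'-g^{sh}_{n,st}h'_{n,st}$ pointwise on the grid, again checking the endpoint $j=n^{2}-1$ separately where all terms are $0$. Integrating this identity over $\mathcal{R}_{n}$ and evaluating $\int_{\mathcal{R}_{n}}(g_{n,st}h_{n,st})'\,d\lambda_{n,st}$ by $(i)$ applied to the product $g_{n,st}h_{n,st}$ produces precisely the boundary terms $(g_{n,st}h_{n,st})((n^{2}-1)/n)-(g_{n,st}h_{n,st})(-n)$, which is the quantity abbreviated $gh_{n,st}((n^{2}-1)/n)-gh_{n,st}(-n)$ in the statement, and $(iii)$ follows.

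The only point requiring attention, rather than a genuine obstacle, is the bookkeeping of the endpoint conventions at $(n^{2}-1)/n$: the definitions deliberately zero out the discrete derivative and the shift there, and one must confirm that this choice is consistent with the telescoping in $(i)$ and with the Leibniz identity in $(ii)$ at that single cell. Once that is checked, the rest is a routine manipulation of finite sums.
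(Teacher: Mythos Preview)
Your proposal is correct and follows essentially the same route as the paper: a telescoping sum for $(i)$, a pointwise add-and-subtract Leibniz computation for $(ii)$ with the endpoint $j=n^{2}-1$ checked separately, and for $(iii)$ an application of $(ii)$ with the roles of $g_{n,st}$ and $h_{n,st}$ swapped followed by $(i)$ on the product. The paper presents $(iii)$ by integrating $(h_{n,st}g_{n,st})'=h'_{n,st}g^{sh}_{n,st}+h_{n,st}g'_{n,st}$ directly and invoking $(i)$, which is exactly your rearrangement.
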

\begin{proof}

$(i)$. We have, using Definition \ref{discretederivshift}, see also Definition \ref{internint};\\

$\int_{\mathcal{R}_{n}}g'_{n,st} d\lambda_{n,st}$\\

$={1\over n}\sum_{j=-n^{2}}^{n^{2}-2}g'_{n,st}({j\over n})$\\

$={1\over n}\sum_{j=-n^{2}}^{n^{2}-2}n(g_{n,st}({j+1\over n})-g_{n,st}({j\over n}))$\\

$=g_{n,st}({n^{2}-1\over n})-g_{n,st}(-n)$\\

$(ii)$. Again, by Definition \ref{discretederivshift};\\

$(gh_{n,st})'({j\over n})$\\

$=n(gh_{n,st}({j+1\over n})-gh_{n,st}({j\over n}))$\\

$=n((g_{n,st}({j+1\over n})-g_{n,st}({j\over n}))h_{n,st}({j+1\over n})+g_{n,st}({j\over n})(h_{n,st}({j+1\over n})-h_{n,st}({j\over n})))$\\

$=g'_{n,st}({j\over n})h^{sh}_{n,st}({j\over n})+g_{n,st}({j\over n})h'_{n,st}({j\over n})$\\

$=(g'_{n,st}h^{sh}_{n,st}+g_{n,st}h'_{n,st})({j\over n})$  $(-n^{2}\leq j\leq n^{2}-2)$\\

$(g'_{n,st}h^{sh}_{n,st}+g_{n,st}h'_{n,st})({n^{2}-1\over n})=(g_{n,st}h_{n,st})'({n^{2}-1\over n})=0$\\

$(iii)$. By $(i),(ii)$;\\

$\int_{\mathcal{R}_{n}}(h_{n,st}g_{n,st})'d\lambda_{n,st}$\\

$=\int_{\mathcal{R}_{n}}(h'_{n,st}g^{sh}_{n,st}+h_{n,st}g'_{n,st}) d\lambda_{n,st}$\\

$=gh_{n,st}({n^{2}-1\over n})-gh_{n,st}(-n)$\\
\end{proof}

\begin{defn}
\label{functiontails}

For $n\in\mathcal{N}$, we let $\theta_{n}:\mathcal{R}\rightarrow\mathcal{C}$ be defined by $\theta_{n}(t)=n(exp({-\pi it\over n})-1)$, and let $\beta_{n}:\mathcal{R}\rightarrow\mathcal{C}$ be defined by $\beta_{n}(t)=n(exp({\pi it\over n})-1)$. We let $\{\phi_{n},\psi_{n}\}$ denote their $\mathfrak{C}_{n}$-measurable counterparts on $\mathcal{R}_{n}$.
If $g_{n,st}$ is $\mathfrak{C}_{n,st}$-measurable, we let;\\

$C_{n}(t)=g_{n,st}({n^{2}-1\over n})exp_{n,st}(-\pi i{n^{2}-1\over n}t)-g_{n,st}(-n)exp_{n,st}(-\pi i(-n)t)$\\

$D_{n}(t)=-{1\over n}g_{n,st}(-n)exp_{n,st}(\pi i{t\over n})exp_{n,st}(-\pi i(-n)t)$.\\

$C'_{n}(t)=-g'_{n,st}(-n)exp_{n,st}(-\pi i(-n)t)$\\

$D'_{n}(t)=-{1\over n}g'_{n,st}(-n)exp_{n,st}(\pi i{t\over n})exp_{n,st}(-\pi i(-n)t)$.\\

$E_{n}(t)=\phi_{n}(t)D_{n}(t)-C_{n}(t)$\\

$E'_{n}(t)=\phi_{n}(t)D'_{n}(t)-C'_{n}(t)$\\

$F_{n}(t)=\psi_{n}(t)\phi_{n}(t)D_{n}(t)-\psi_{n}(t)C_{n}(t)+\phi_{n}(t)D'_{n}(t)-C'_{n}(t)$\\

considered as $\mathfrak{C}_{n,st}$-measurable functions.
\end{defn}

\begin{lemma}{Discrete Fourier transform}\\
\label{dft}

Let $g_{n,st}$ be $\mathfrak{C}_{n,st}$-measurable. Then, for $t\neq 0$;\\

$\hat{g}_{n,st}(t)={\hat{g'}_{n,st}(t)+E_{n}(t)\over\psi_{n}(t)}={\hat{g''}_{n,st}(t)+F_{n}(t)\over\psi_{n}^{2}(t)}$\\

\end{lemma}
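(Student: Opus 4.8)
The plan is to compute $\hat{g}_{n,st}(t)$ by applying the discrete integration-by-parts formula, Lemma \ref{disccalc}$(iii)$, to the product $g_{n,st}(x)\,exp_{n,st}(-\pi i x t)$, viewed as a function of $x\in\mathcal{R}_n$. The key algebraic input is that the discrete derivative of $exp_{n,st}(-\pi i x t)$ in the variable $x$ is $\psi_n(t)\,exp_{n,st}(-\pi i x t)$ up to the shift: indeed $n(exp(-\pi i\frac{(j+1)}{n}t)-exp(-\pi i\frac{j}{n}t))=n(exp(-\pi i\frac{t}{n})-1)exp(-\pi i\frac{j}{n}t)=\theta_n(t)\,exp_{n,st}(-\pi i\frac{j}{n}t)$, and one must keep careful track of whether $\theta_n$ or $\psi_n$, and which shifted exponential, appears — this is exactly the bookkeeping that the boundary terms $C_n,D_n$ in Definition \ref{functiontails} are designed to absorb. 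So first I would write $\int_{\mathcal{R}_n}(g_{n,st}\cdot exp_{n,st}(-\pi i\cdot t))'\,d\lambda_{n,st}$ two ways: by Lemma \ref{disccalc}$(i)$ it equals the boundary difference $g\cdot exp_{n,st}(-\pi i\cdot t)\big|_{-n}^{(n^2-1)/n}$, which is precisely $C_n(t)$; by Lemma \ref{disccalc}$(ii)$ it equals $\int g'_{n,st}\,(exp_{n,st}(-\pi i\cdot t))^{sh}\,d\lambda_{n,st}+\int g_{n,st}\,(exp_{n,st}(-\pi i\cdot t))'\,d\lambda_{n,st}$.

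Next I would identify the two integrals on the right. The second is $\psi_n(t)$ (or $\theta_n$, depending on the shift convention forced by Definition \ref{discretederivshift}) times $\hat{g}_{n,st}(t)=\int g_{n,st}\,exp_{n,st}(-\pi i\cdot t)\,d\lambda_{n,st}$, modulo a correction coming from the endpoint $x=(n^2-1)/n$ where the discrete derivative of the exponential is set to $0$; that correction, together with the $\frac{1}{n}$-shift in the first integral (since $(exp_{n,st}(-\pi i\cdot t))^{sh}(\frac{j}{n})=exp_{n,st}(-\pi i\frac{j}{n}t)\cdot exp_{n,st}(-\pi i\frac{t}{n})=exp_{n,st}(-\pi i\frac{t}{n})\cdot exp_{n,st}(-\pi i\frac{j}{n}t)$), is what produces the extra term $D_n(t)$ and ultimately lets us write the first integral as $exp_{n,st}(\pi i\frac{t}{n})$ — wait, more precisely, $exp_{n,st}(-\pi i\frac{t}{n})^{-1}$-type factor — times $\hat{g'}_{n,st}(t)$ plus a boundary piece. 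Collecting everything and solving the resulting linear relation for $\hat{g}_{n,st}(t)$, which is legitimate for $t\neq 0$ since then $\psi_n(t)\neq 0$, should yield $\hat{g}_{n,st}(t)=\frac{\hat{g'}_{n,st}(t)+E_n(t)}{\psi_n(t)}$ after checking that the accumulated boundary terms reorganize into exactly $E_n(t)=\phi_n(t)D_n(t)-C_n(t)$. The second equality in the statement is then obtained simply by substituting the first identity into itself with $g$ replaced by $g'$: $\hat{g}_{n,st}=\frac{\hat{g'}_{n,st}+E_n}{\psi_n}=\frac{1}{\psi_n}\left(\frac{\hat{g''}_{n,st}+E'_n}{\psi_n}+E_n\right)=\frac{\hat{g''}_{n,st}+E'_n+\psi_n E_n}{\psi_n^2}$, and one checks that $E'_n+\psi_n E_n=F_n$, which matches Definition \ref{functiontails} upon expanding $E_n,E'_n$ and using $\psi_n D_n=$ (a shifted exponential factor) — this last identification is a direct computation from the definitions.

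The main obstacle I anticipate is purely the sign-and-shift bookkeeping at the two endpoints $x=-n$ and $x=(n^2-1)/n$: the discrete derivative and shift operators of Definition \ref{discretederivshift} are defined to vanish or truncate at the right endpoint, so the clean telescoping identity $\sum n(f(\frac{j+1}{n})-f(\frac{j}{n}))exp(\cdots)$ is perturbed there, and one must verify that the perturbation is captured exactly by the somewhat intricate definitions of $C_n,D_n,E_n,F_n$ rather than by some additional leftover term. Concretely, the risk is an off-by-one error in the range $-n^2\le j<n^2-1$ versus $-n^2\le j\le n^2-2$, or a misplaced factor of $exp_{n,st}(\pm\pi i\frac{t}{n})$. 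Once the endpoint terms are pinned down, the derivation of the first displayed identity is a two-line rearrangement of Lemma \ref{disccalc}, and the second follows formally by iteration as sketched above; no analytic estimates or transfer arguments are needed here, since this lemma lives entirely at the level of finite discrete sums.
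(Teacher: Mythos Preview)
Your proposal is correct and follows essentially the same route as the paper: discrete integration by parts (Lemma \ref{disccalc}) together with the identity $exp'_{n,st}(-\pi ixt)=\phi_n(t)\,exp_{n,st}(-\pi ixt)$, then iteration for the second equality (and indeed $F_n=\psi_n E_n+E'_n$, exactly as you compute). The only organizational difference is that the paper starts from $\hat{g'}_{n,st}=\int g'_{n,st}\,exp_{n,st}(-\pi ixt)\,d\lambda_{n,st}$ and applies (iii) directly, so the shift lands on $g$ rather than on the exponential, and the correction $D_n$ then appears cleanly from a change of variables in $\hat{g^{sh}}_{n,st}$; your (i)+(ii) decomposition with the shift on the exponential is equivalent, though matching the resulting endpoint terms to the given $C_n,D_n,E_n$ is marginally more awkward.
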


\begin{proof}
We have, using Lemma \ref{disccalc}(iii), that;\\

$\hat{g'}_{n,st}(t)=\int_{\mathcal{R}_{n}} g'_{n,st}(x)exp_{n,st}(-\pi ixt)d\lambda_{n,st}(x)$\\

$=-\int_{\mathcal{R}_{n}} g^{sh}_{n,st}(x)exp'_{n,st}(-\pi ixt)d\lambda_{n,st}(x)+ C_{n}(t)$\\

Moreover, for $-n^{2}\leq j<n^{2}-1$;\\

$exp'_{n,st}(-\pi i{j\over n}t)=n(exp_{n,st}(-\pi i{j+1\over n}t)-exp_{n,st}(-\pi i{j\over n}t))$\\

$=nexp_{n,st}(-\pi i{j\over n}t)(exp_{n,st}(-\pi i{t\over n})-1)$\\

$=exp_{n,st}(-\pi i{j\over n}t)\phi_{n}(t)$.\\

Hence, noticing that $g^{sh}_{n,st}({n^{2}-1\over n})=0$, by Definition \ref{discretederivshift};\\

$\hat{g'}_{n,st}(t)=-\int_{\mathcal{R}_{n}} g^{sh}_{n,st}(x)exp_{n,st}(-\pi ixt)\phi_{n}(t)d\lambda_{n,st}(x)+C_{n}(t)$\\

$=-\phi_{n}(t)\hat{g^{sh}_{n,st}}(t)+C_{n}(t)$\\

We also have, using a change of variables, and Definition \ref{discretederivshift}, that;\\

$\hat{g^{sh}_{n,st}}(t)=\int_{\mathcal{R}_{n}}g^{sh}_{n,st}(x)exp_{n,st}(-\pi ixt)d\lambda_{n,st}(x)$\\

$=\int_{1-n^{2}\over n}^{n}g_{n,st}(u)exp_{n,st}(-\pi i(u-{1\over n})t) d\lambda_{n,st}(u)$\\

$=exp_{n,st}(\pi i{t\over n})(\hat{g}_{n,st}(t)-{1\over n}g_{n,st}(-n)exp_{n,st}(-\pi i(-n)t))$\\

$=exp_{n,st}(\pi i{t\over n})\hat{g}_{n,st}(t)+D_{n}(t)$\\

Therefore;\\

$\hat{g'}_{n,st}(t)=-\phi_{n}(t)exp_{n,st}(\pi i{t\over n})\hat{g}_{n,st}(t)-\phi_{n}(t)D_{n}(t)+C_{n}(t)$\\

$=\psi_{n}(t)\hat{g}_{n,st}(t)-E_{n}(t)$\\

and by the same calculation;\\

$\hat{g''}_{n,st}(t)=\psi_{n}(t)\hat{g'}_{n,st}(t)-E'_{n}(t)$\\

$=\psi_{n}(t)(\psi_{n}(t)\hat{g}_{n,st}(t)-E_{n}(t))-E'_{n}(t)$\\

$=\psi_{n}^{2}(t)\hat{g}_{n,st}(t)-F_{n}(t)$\\

Rearranging, we have that, for $t\neq 0$;\\

$\hat{g}_{n,st}(t)={\hat{g'}_{n,st}(t)+E_{n}(t)\over\psi_{n}(t)}={\hat{g''}_{n,st}(t)+F_{n}(t)\over\psi_{n}^{2}(t)}$\\

as required.

\end{proof}

\begin{lemma}
\label{unifbounded}
If $g\in S(\mathcal{R})$, then the functions $\hat{g''}_{n,st}(t)$ and $F_{n}(t)$ are uniformly bounded, independently of $n$, for $n\geq 2$.
\end{lemma}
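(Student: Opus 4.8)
The plan is to bound $\hat{g''}_{n,st}(t)$ and $F_n(t)$ separately, using the Schwartz decay of $g$ together with the discrete Fourier transform identity from Lemma \ref{dft}. For $\hat{g''}_{n,st}(t)$, I would argue directly from the definition: since $g\in S(\mathcal{R})$, the discrete second derivative $g''_{n,st}$ is, up to a vanishing error, a Riemann sum for $g''$, so $\|g''_{n,st}\|_{L^1(\lambda_{n,st})}$ is bounded by some constant depending only on $\int|g''|\,d\mu$ and the tail decay of $g''$ (using the same kind of tail estimate as in Theorem \ref{gSinteg}, i.e. comparing $\frac1n\sum_{|j|\geq Nn}|g''(j/n)|$ with $\int_{|x|>N}|g''|\,d\mu$ plus a $C/N$ term from the Schwartz bound $|x|^2|g''(x)|\leq C$). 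Then $|\hat{g''}_{n,st}(t)|\leq \int_{\mathcal{R}_n}|g''_{n,st}(x)|\,|exp_{n,st}(-\pi ixt)|\,d\lambda_{n,st}(x) = \|g''_{n,st}\|_{L^1}$, since the discretized exponential has modulus $1$; this bound is uniform in both $t$ and $n\geq 2$.

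For $F_n(t)$, I would unwind Definition \ref{functiontails}. The key observations are: $|\phi_n(t)| = n|exp(-\pi it/n)-1| = 2n|\sin(\pi t/(2n))| \leq \pi|t|$, and similarly $|\psi_n(t)|\leq \pi|t|$; the discretized exponentials all have modulus $1$; and $C_n(t), D_n(t), C'_n(t), D'_n(t)$ are all controlled by the boundary values $|g_{n,st}(\pm\text{end})|$, $|g'_{n,st}(\pm\text{end})|$, and $\frac1n$ factors. Here the Schwartz decay is essential: $g_{n,st}(n^2-1/n)$, $g_{n,st}(-n)$, and $g'_{n,st}(-n)$ are values of $g$ (resp. a difference quotient of $g$) at points of magnitude $\approx n$, so by rapid decrease they are $O(n^{-k})$ for every $k$ — in particular $o(1/|t|)$ is not automatic, so I must be careful: the products $\psi_n(t)C_n(t)$ and $\psi_n(t)\phi_n(t)D_n(t)$ involve factors of size up to $|t|^2$ or $|t|\cdot n$. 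The point is that in $F_n$ these always come multiplied by a boundary value of $g$ at scale $n$, which is $O(n^{-3})$ say, and $|t|$ ranges over $\mathcal{R}_n$ so $|t|\leq n^2$; thus $|t|^2\cdot O(n^{-3})$ need not be bounded. I expect one must instead use that the relevant combination telescopes or that $|t|\leq n^2$ combined with decay faster than any polynomial kills it — concretely, bound each summand of $F_n$ by $(\text{polynomial in }|t|/n \text{ or } |t|)\times(\text{boundary value})$ and use $|g(\pm n)|\leq C_k n^{-k}$, $|t|\leq n^2$, to get a uniform bound.

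The main obstacle will be precisely this bookkeeping in $F_n(t)$: matching each factor of $|t|$ (coming from $\phi_n$, $\psi_n$) and each factor of $n$ (coming from $g'_{n,st}$, which is $n$ times a difference, and from the range $|t|\leq n^2$) against the super-polynomial decay of $g$ at scale $n$. I would handle it by fixing a single large exponent $k$ (say $k=5$) in the Schwartz bound $\sup_x|x|^k|g^{(j)}(x)|\leq C$ for $j=0,1$, noting $|g_{n,st}(\text{endpoint})|\leq C n^{-k}$, $|g'_{n,st}(-n)|\leq 2C n^{-(k-1)}$ (difference quotient of two such values, times $n$), and then checking term by term that every product in $F_n(t)$ is bounded by $C' n^{-1}$ or better on $|t|\leq n^2$. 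The other parts — that discretized exponentials have modulus $1$, that $|\phi_n(t)|,|\psi_n(t)|\leq \pi|t|$, and the Riemann-sum estimate for $\|g''_{n,st}\|_{L^1}$ — are routine, essentially repetitions of the argument already given in the proof of Theorem \ref{gSinteg}.
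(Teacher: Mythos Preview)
Your proposal is essentially correct, but you make the bound on $F_n$ harder than it needs to be, and you misread the range of $t$. Recall from Definition \ref{transfdefs} that $\overline{\mathcal{R}_\eta}=\overline{\mathcal{R}_{\eta^2,\eta}}$, so $\mathcal{R}_n=[-n,n)$ and $|t|\leq n$, not $|t|\leq n^2$. With the correct range your bound $|\phi_n(t)|,|\psi_n(t)|\leq \pi|t|\leq \pi n$ is already of the same strength as the paper's, and the ``bookkeeping'' worry disappears: the mildest Schwartz decay $|g(x)|\leq D_1/|x|$ suffices, and no $k=5$ is needed. The paper in fact bypasses $t$ entirely by using the trivial estimate $|e^{i\theta}-1|\leq 2$, giving $|\phi_n(t)|,|\psi_n(t)|\leq 2n$ uniformly; combined with $|D_n(t)|\leq \tfrac1n|g(-n)|$ and $|C_n(t)|\leq |g(\tfrac{n^2-1}{n})|+|g(-n)|$, one gets $|F_n(t)|\leq 9n|g(-n)|+2n|g(\tfrac{n^2-1}{n})|+3n|g(\tfrac{1-n^2}{n})|\leq 16D_1$ in one line. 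Your route via $|\phi_n|\leq\pi|t|$ and high-order decay still works (even under your mistaken $|t|\leq n^2$, since you compensate with $k=5$), but it is more laborious for no gain.

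For $\hat{g''}_{n,st}$ your plan is the same as the paper's, though one point deserves care: $g''_{n,st}$ is the \emph{discrete} second derivative of $g$, not the discretisation $(g'')_{n,ext}$ of $g''$, so the $L^1$ estimate from Theorem~\ref{gSinteg} does not apply directly. The paper handles this by writing $g'_{n,st}(j/n)=g'(\xi_j)$ via the mean value theorem, then $|g''_{n,st}(j/n)|=n|g'(\xi_{j+1})-g'(\xi_j)|\leq n\int_{\xi_j}^{\xi_{j+1}}|g''|$, and telescoping the sum to get $\|g''\|_{L^1(\mathcal{R})}$ plus a bounded boundary term. Your phrase ``up to a vanishing error, a Riemann sum for $g''$'' gestures at this, but you should make the MVT step explicit rather than invoking Theorem~\ref{gSinteg}.
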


\begin{proof}
Observing that;\\

 $|D_{n}(t)|\leq {1\over n}|g_{n,st}|(-n)$\\

 $|D'_{n}(t)|\leq {1\over n}|g'_{n,st}|(-n)$\\

 $|\phi_{n}(t)|\leq 2n$, $|\psi_{n}(t)|\leq 2n$\\

$|C_{n}(t)|\leq |g_{n,st}|({n^{2}-1\over n})+|g_{n,st}|(-n)$\\

$|C'_{n}(t)|\leq|g'_{n,st}|(-n)$\\

we obtain;\\

$|F_{n}(t)|\leq 6n|g_{n,st}|(-n)+2n|g_{n,st}|({n^{2}-1\over n})+3|g'_{n,st}|(-n)$\\

$\leq 6n|g_{n,st}|(-n)+2n|g_{n,st}|({n^{2}-1\over n})+3n|g_{n,st}|({1-n^{2}\over n})+3n|g_{n,st}|(-n)$\\

$=9n|g_{n,st}|(-n)+2n|g_{n,st}|({n^{2}-1\over n})+3n|g_{n,st}|({1-n^{2}\over n})$\\

As $g\in S(\mathcal{R})$, there exist a constant $D_{1}$, such that $|g(x)|\leq{D_{1}\over |x|}$, $(x\neq 0)$. Then;\\

$|F_{n}(t)|\leq D_{1}({9n\over n}+5{n^{2}\over n^{2}-1})\leq 16D_{1}$\\

We now calculate;\\

$|\hat{g''}_{n,st}|(t)=|\int_{\mathcal{R}_{n}}g''_{n,st}(x)exp_{n}(-\pi ixt) d\lambda_{n}(x)|$\\

$\leq\int_{\mathcal{R}_{n}}|g''_{n,st}|(x) d\lambda_{n}(x)$\\

$={1\over n}\sum_{j=-n^{2}}^{n^{2}-2}|g''_{n,st}|({j\over n})$\\

$={1\over n}(\sum_{j=-n^{2}}^{n^{2}-3}n|g'_{n,st}({j+1\over n})-g'_{n,st}({j\over n})|)+D_{n}$\\

where $D_{n}=|g'_{n,st}|({n^{2}-2\over n})$.\\

$|\hat{g''}_{n,st}|(t)\leq (\sum_{j=-n^{2}}^{n^{2}-3}|g'_{n,st}({j+1\over n})-g'_{n,st}({j\over n})|)+D_{n}$\\

Without loss of generality, we can assume that $g$ is real valued, otherwise, take real and imaginary parts. Then, by the mean value theorem, for $-n^{2}\leq j\leq n^{2}-3$;\\

$g'_{n,st}({j\over n})=g'({j\over n}+c(j,n))$, where $0<c(j,n)<{1\over n}$\\

$|\hat{g''}_{n,st}|(t)\leq (\sum_{j=-n^{2}}^{n^{2}-3}|g'({j+1\over n}+c(j+1,n))-g'({j\over n}+c(j,n))|)+D_{n}$\\

$=(\sum_{j=-n^{2}}^{n^{2}-3}|\int_{{j\over n}+c(j,n)}^{{j+1\over n}+c(j+1,n)}g''(x) dx|)+D_{n}$ (by the FTC)\\

$\leq(\sum_{j=-n^{2}}^{n^{2}-2}\int_{{j\over n}+c(j,n)}^{{j+1\over n}+c(j+1,n)}|g''|(x)dx)+D_{n}$\\

$=(\int_{-n+c(-n^{2},n)}^{{n^{2}-1\over n}+c(n^{2}-1,n)}|g''|(x)dx)+D_{n}$\\

$\leq (\int_{-n}^{n}|g''(x)|dx)+D_{n}\leq M +2B$\\

where $M=||g''||_{L^{1}(\mathcal{R})}$, and $B=||g||_{C(\mathcal{R})}$.

\end{proof}

\begin{lemma}
\label{tailvanish}
If $g\in S(\mathcal{R})$ and $\epsilon>0$ is standard, there exists a constant $N(\epsilon)\in{\mathcal N}_{>0}$, such that for all $n>N(\epsilon)$, for all $L,L'\in{\mathcal N}$ with $N(\epsilon)<|L|\leq |L'|\leq n$, $LL'>0$;\\

$\int_{L}^{L'}|\hat{g}_{n,st}|(t) d\lambda_{n}(t)<\epsilon$\\

\end{lemma}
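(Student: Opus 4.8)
The plan is to combine the two-fold summation-by-parts identity of Lemma \ref{dft} with the uniform bounds of Lemma \ref{unifbounded} so as to obtain a pointwise estimate $|\hat{g}_{n,st}(t)|\leq C/(4t^{2})$, with $C$ \emph{independent of $n$}, valid at every grid point $t=j/n$ with $0<|t|\leq n$, and then to integrate this tail. Since $\hat{g}_{n,st}$, $\psi_{n}$ and $\phi_{n}$ are constant on each interval $[j/n,(j+1)/n)$, it suffices to work at the points $t=j/n$. For such $t\neq 0$, Lemma \ref{dft} gives $\hat{g}_{n,st}(t)=(\hat{g''}_{n,st}(t)+F_{n}(t))/\psi_{n}^{2}(t)$, while Lemma \ref{unifbounded} supplies a constant $C$, depending only on $\|g''\|_{L^{1}(\mathcal{R})}$, $\|g\|_{C(\mathcal{R})}$ and the decay rate of $g$ (hence not on $n$), with $|\hat{g''}_{n,st}(t)+F_{n}(t)|\leq C$ for all $n\geq 2$ and all $t$.

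The first step is a lower bound on $\psi_{n}$. At $t=j/n$ one computes $\psi_{n}(j/n)=n\big(\exp(\pi i j/n^{2})-1\big)$, so $|\psi_{n}(j/n)|=2n\,\bigl|\sin(\pi j/(2n^{2}))\bigr|$. When $|j|\leq n^{2}$ the argument lies in $[-\pi/2,\pi/2]$, and the elementary inequality $|\sin\theta|\geq (2/\pi)|\theta|$ there gives $|\psi_{n}(j/n)|\geq 2n\cdot|j|/n^{2}=2|j/n|$; hence $|\psi_{n}^{2}(t)|\geq 4t^{2}$ and $|\hat{g}_{n,st}(t)|\leq C/(4t^{2})$. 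This is exactly where the hypothesis $|L'|\leq n$ enters: it forces every grid point $j/n$ occurring in the integral to satisfy $|j|\leq L'n\leq n^{2}$, so that the sine lower bound remains valid.

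It remains to sum the tail. Take, for instance, $0<L\leq L'\leq n$; the case $L'\leq L<0$ is identical, since every bound above depends only on $|t|$ (alternatively, one may use $|\hat{g}_{n,st}(-t)|=|\hat{g}_{n,st}(t)|$, after reducing to real $g$ as in the proof of Lemma \ref{unifbounded}). The grid points of $[L,L')$ are the $j/n$ with $Ln\leq j\leq L'n-1$, so $|t|\geq L>N(\epsilon)$ (in particular $t\neq 0$) and $|t|\leq n$, whence
\[
\int_{L}^{L'}|\hat{g}_{n,st}|(t)\,d\lambda_{n}(t)\;\leq\;\frac{1}{n}\sum_{j=Ln}^{L'n-1}\frac{C n^{2}}{4 j^{2}}\;=\;\frac{Cn}{4}\sum_{j=Ln}^{L'n-1}\frac{1}{j^{2}}\;\leq\;\frac{Cn}{4}\sum_{j\geq Ln}\frac{1}{j^{2}}\;\leq\;\frac{Cn}{4}\cdot\frac{2}{Ln}\;=\;\frac{C}{2L},
\]
using $\sum_{j\geq m}j^{-2}\leq m^{-2}+\int_{m}^{\infty}x^{-2}\,dx\leq 2/m$ for $m\geq 1$. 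Choosing $N(\epsilon)=\max\{2,\lceil C/(2\epsilon)\rceil\}$ then makes the right-hand side $<\epsilon$ whenever $N(\epsilon)<|L|$, and the requirement $n>N(\epsilon)\geq 2$ ensures that Lemma \ref{unifbounded} applies. This yields the claim.

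The whole argument is essentially bookkeeping on top of Lemmas \ref{dft} and \ref{unifbounded}; the only points needing a little care are recognising why the restriction $|L'|\leq n$ is present — it is precisely what keeps the lower bound $|\psi_{n}(j/n)|\geq 2|j/n|$ valid — and carrying out the elementary discrete tail estimate for $\sum j^{-2}$ cleanly.
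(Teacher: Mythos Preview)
Your proof is correct and follows essentially the same route as the paper's: lower-bound $|\psi_{n}(t)|$ by $2|t|$ via the concavity inequality for $\sin$, combine Lemma~\ref{dft} and Lemma~\ref{unifbounded} to get $|\hat g_{n,st}(t)|\le C/(4t^{2})$, and then estimate the discrete tail by comparison with $\int x^{-2}\,dx$. Your tail bound $\sum_{j\ge m}j^{-2}\le 2/m$ is slightly cleaner than the paper's version (which integrates from $Ln-1$ to $n^{2}-1$ and picks up an extra $1/(n-1)$ term), and you make explicit why the hypothesis $|L'|\le n$ is needed, but the substance is identical.
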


\begin{proof}

We first calculate;\\

$|\psi_{n}(t)|=n|exp({\pi it\over n})-1|$\\

$=n|cos({\pi t\over n})+isin({\pi t\over n})-1|$\\

$=n((cos({\pi t\over n})-1)^{2}+sin({\pi t\over n})^{2})^{1\over 2}$\\

$=n((2-2cos({\pi t\over n}))^{1\over 2})$\\

$=\sqrt{2}n(2sin^{2}({\pi t\over 2n}))^{1\over 2}$\\

$=2n|sin({\pi t\over 2n})|\geq 2n({|t|\over n})=2|t|$ $(-n\leq t<n)$\\

$|\psi_{n}(t)|^{2}\geq 4t^{2}$ $(-n\leq t<n)$ $(*)$\\

Letting $W$ denote the bound obtained in Lemma \ref{unifbounded}, using Lemma \ref{dft}, $(*)$, and, assuming, without loss of generality, that $0\leq L\leq L'$;\\

$\int_{L}^{L'}|\hat{g}|_{n,st}(t)d\lambda_{n}(t)$\\

$\leq\int_{L}^{n}|\hat{g}|_{n,st}(t)d\lambda_{n}(t)$\\

$\leq\int_{L}^{n}{W\over 4t^{2}}d\lambda_{n}(t)$\\

$={1\over n}\sum_{j=Ln}^{n^{2}-1}{W\over 4({j\over n})^{2}}$\\

$=n\sum_{j=Ln}^{n^{2}-1}{W\over 4j^{2}}$\\

$\leq n\int_{Ln-1}^{n^{2}-1}{W\over 4x^{2}} dx$\\

$=n[{-W\over 4x}]^{n^{2}-1}_{Ln-1}={Wn\over 4}({1\over Ln-1}-{1\over n^{2}-1})\leq {W\over 4}({1\over L-1}+{1\over n-1})<\epsilon$\\

if $min(n,L)>N(\epsilon)={W\over 2\epsilon}+1$\\

\end{proof}

We can now show the analogous result to Theorem \ref{gSinteg};\\

\begin{theorem}
\label{transgSinteg}

Let $g\in S({\mathcal R})$, then $\hat{g}_{\eta}$, as given in Definition \ref{transfdefs}, is $S$-integrable on $\overline{{\mathcal R}_{\eta}}$. Moreover ${^{\circ}\hat{g}}_{\eta}=st^{*}(\hat{g}_{\infty})$, almost everywhere $L(\lambda_{\eta})$, and;\\

${^{\circ}\int_{\overline{{\mathcal R}_{\eta}}}\hat{g}_{\eta} d\lambda_{\eta}}=\int_{\overline{{\mathcal R}_{\eta}}}st^{*}(\hat{g}_{\infty}) d L(\lambda_{\eta})=\int_{\mathcal{R}} \hat{g} d\mu$\\
\end{theorem}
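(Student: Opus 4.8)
The plan is to mirror exactly the argument used for $g_\eta$ in Theorem \ref{gSinteg}, replacing the crude Schwartz bound on $g$ by the sharper structural input available for $\hat g_{n,st}$ via the discrete Fourier transform identity. First I would establish the analogue of the tail estimate $(***)$: using Lemma \ref{tailvanish}, for any standard $\epsilon>0$ there is $N(\epsilon)\in\mathcal{N}_{>0}$ such that $\int_L^{L'}|\hat g_{n,st}|\,d\lambda_n<\epsilon$ whenever $N(\epsilon)<|L|\leq|L'|\leq n$ and $LL'>0$. I would transfer this statement to ${}^*\mathcal{R}$: since it holds for all $n\in\mathcal{N}_{>0}$ with the quantifiers as stated, the transferred statement gives $\int_L^{L'}|\hat g_\eta|\,d\lambda_\eta<\epsilon$ for all infinite $L,L'$ of the same sign in $\overline{\mathcal{R}}_\eta$ with $|L|\leq|L'|\leq\eta$; letting $\epsilon\to 0$ through the standard reals yields $\int_L^{L'}|\hat g_\eta|\,d\lambda_\eta\simeq 0$ for all such infinite $L,L'$. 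Here one must be slightly careful that $\hat g_{n,st}$ transfers to $\hat g_\eta$ — this is exactly the content of Definition \ref{transfdefs}, since $\hat g_{n,st}$ is a standard function obtained from $g_{n,st}$ by the same finite-sum recipe that defines $\hat g_\eta$ internally.

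Next, as in Theorem \ref{gSinteg}, I would form the internal sequence $s_n=\int_{\overline{\mathcal{R}}_\eta}|\hat g_\eta-\hat g_\eta\chi_{[-n,n)}|\,d\lambda_\eta$ for $1\leq n\leq\eta$, observe that $s_{\omega'}\simeq 0$ for all infinite $\omega'\leq\eta$ by the tail estimate just proved, and invoke Theorem \ref{intlim} to conclude $\lim_{n\to\infty}{}^\circ s_n=0$. To apply Theorem \ref{andcrit} (Anderson's $S$-integrability criterion) I also need each $\hat g_\eta\chi_{[-n,n)}$ to be finite in the sense of Definition \ref{finite}; this follows because $\hat g_\eta$ is uniformly bounded — indeed $|\hat g_{n,st}(t)|\le \int_{\mathcal R_n}|g_{n,st}|\,d\lambda_n$, which is bounded independently of $n$ since $g\in L^1$ and $g_{n,ext}\to g$ in $L^1$ by $(*)$ of Theorem \ref{gSinteg}, and this transfers. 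Hence $\hat g_\eta$ is $S$-integrable on $\overline{\mathcal{R}}_\eta$.

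Then I would identify ${}^\circ\hat g_\eta$ with $st^*(\hat g_\infty)$ $L(\lambda_\eta)$-almost everywhere. On the infinite part of $\overline{\mathcal{R}}_\eta$ this follows from the tail vanishing together with Theorem \ref{stlim}, exactly as for $g_\eta$. On the finite part, I would show $\hat g_\eta(x)\simeq\hat g({}^\circ x)$ for all finite $x$: since $\hat g\in S(\mathcal{R})$ is continuous (Remark \ref{remark,schwartz}), by Theorem \ref{nscont} it suffices to know that the internal object $\hat g_\eta(x)={}^*\widehat{g}({[\eta x]/\eta})$ agrees infinitely closely with the transfer ${}^*\hat g$ of the genuine Fourier transform at ${[\eta x]/\eta}$ — i.e. that the hyperfinite sum defining $\hat g_\eta$ approximates the $*$-integral $\int_{-\infty}^\infty {}^*g(u)\,{}^*e^{-\pi iut}\,du$ for finite $t$. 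This is the step I expect to be the main obstacle: unlike the case of $g_\eta$, where continuity and decay gave the pointwise statement directly, here I must compare a Riemann-type hyperfinite sum over the truncated domain $\overline{\mathcal R}_\eta$ against the full improper integral, controlling both the discretization error on a finite window (which is $o(1)$ by uniform continuity of $g$ and its decay, transferred) and the truncation tail $\int_{|u|>\omega/\eta}|g|$ (which is infinitesimal since $\omega/\eta$ is infinite and $g\in S(\mathcal R)\subset L^1$). Once this pointwise identification is in hand, the conclusion
$${}^\circ\!\!\int_{\overline{\mathcal R}_\eta}\hat g_\eta\,d\lambda_\eta=\int_{\overline{\mathcal R}_\eta}st^*(\hat g_\infty)\,dL(\lambda_\eta)=\int_{\mathcal R}\hat g\,d\mu$$
follows by Theorem \ref{sintegspec} (which converts $S$-integrability plus a.e. identification of the standard part into equality of the Loeb integral with the integral of $st^*(\hat g_\infty)$) and Theorem \ref{mmp} (which gives the second equality), exactly as in the proof of Theorem \ref{gSinteg}.
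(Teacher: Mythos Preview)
Your argument for $S$-integrability is essentially the paper's: transfer Lemma \ref{tailvanish}, run the internal sequence $s_n$ through Theorem \ref{intlim}, and apply Anderson's criterion, with finiteness of $\hat g_\eta\chi_{[-n,n)}$ coming from $|\hat g_\eta(t)|\le\int_{\overline{\mathcal R}_\eta}|g_\eta|\,d\lambda_\eta$. The paper phrases this last bound via the $S$-integrability of $g_\eta$ rather than by transferring the uniform $L^1$ estimate, but the content is the same.

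The divergence is in the pointwise identification ${}^\circ\hat g_\eta=st^*(\hat g_\infty)$ at finite $t$. Your plan is to compare the hyperfinite Riemann sum $\hat g_\eta(t)$ directly to ${}^*\hat g$ at $[\eta t]/\eta$, controlling discretisation and truncation; this would work but is laborious, and the paper sidesteps your ``main obstacle'' entirely. Since $|g_\eta(x)\exp_\eta(-\pi ixt)|\le|g_\eta(x)|$ and $g_\eta$ is already $S$-integrable by Theorem \ref{gSinteg}, the integrand $r_t$ is $S$-integrable by Theorem \ref{andcrit}(i); Theorem \ref{sintegspec} then pushes the standard part inside the internal integral defining $\hat g_\eta(t)$, and Theorem \ref{mmp} converts the resulting Loeb integral to $\int_{\mathcal R}g(x)e^{-\pi i\,{}^\circ t\,x}\,d\mu=\hat g({}^\circ t)$. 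No comparison of the sum with ${}^*\hat g$ is ever made. (Incidentally, your equation $\hat g_\eta(x)={}^*\widehat g([\eta x]/\eta)$ is not correct as written: $\hat g_\eta$ is the discrete transform of $g_\eta$, not the $\mathfrak C_\eta$-discretisation of ${}^*\hat g$, and establishing their infinitesimal proximity is precisely the work you were proposing to do.)

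For infinite $t$ your sketch has a genuine gap. In Theorem \ref{gSinteg} the pointwise statement $g_\eta(x)\simeq 0$ came from $\lim_{x\to\infty}g(x)=0$ via Theorem \ref{stlim}, using that $g_\eta$ is the discretised transfer of a standard function. That mechanism is unavailable here: $\hat g_\eta$ is a hyperfinite sum, not $({}^*\hat g)_\eta$, so Theorem \ref{stlim} does not apply, and the integral tail estimate $\int_L^{L'}|\hat g_\eta|\,d\lambda_\eta\simeq 0$ does not by itself force pointwise infinitesimality. The paper closes this by contradiction: if $|{}^\circ\hat g_\eta|>1/n$ on a set $B\subset st^{-1}(\{\pm\infty\})$ of positive $L(\lambda_\eta)$-measure, then ${}^\circ\int_{|t|>t'}|\hat g_\eta|\,d\lambda_\eta$ is bounded below by a fixed positive real for all finite $t'$, and by Overflow for some infinite $t'$ as well, contradicting the tail estimate. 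This is why the theorem asserts only ``almost everywhere''. (An alternative you could use, but which the paper does not: transfer the pointwise bound $|\hat g_{n,st}(t)|\le W/(4t^2)$ implicit in the proof of Lemma \ref{tailvanish}, which does give $\hat g_\eta(t)\simeq 0$ for every infinite $t\in\overline{\mathcal R}_\eta$.)
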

\begin{proof}
By Lemma \ref{tailvanish};\\

$\mathcal{R}\models (\forall n_{(n>N(\epsilon))})(\forall L,N_{(LN\geq 0, N(\epsilon)<|L|,|N|<n)})\int_{L}^{N}|\hat{g}_{n,st}| d\lambda_{n,st} <\epsilon$\\

Hence, the corresponding statement is true in ${^{*}\mathcal{R}}$. In particular, if $\eta$ is infinite, and $\{L,N\}$ are infinite, of the same sign, belonging to $\overline{\mathcal{R}}_{\eta}$, we have that;\\

$\int_{L}^{N}|\hat{g}_{\eta}| d\lambda_{\eta} < \epsilon$\\

As $\epsilon$ was arbitrary we conclude that;\\

$\int_{L}^{N}|\hat{g}_{\eta}| d\lambda_{\eta}\simeq 0$ $(*)$\\

for all infinite $\{L,N\}$, of the same sign, in $\overline{\mathcal{R}}_{\eta}$. Now, using Definition \ref{transfdefs} and the fact that $|exp_{\eta}(-\pi ixt)|\leq 1$, by transfer, we have, for $t\in\overline{\mathcal{R}}_{\eta}$;\\

$|\hat{g}_{\eta}(t)|\leq\int_{\overline{{\mathcal R}_{\eta}}}|g_{\eta}(x)d\lambda_{\eta}=C$\\

where $C$ is finite, as, by Theorem \ref{gSinteg}, $g_{\eta}$ is $S$-integrable. It follows that for $n\in\mathcal{N}$, the functions $\hat{g}_{\eta}\chi_{[-n,n]}$ are finite, in the sense of Definition \ref{finite}. Now, proceeding as in Theorem \ref{gSinteg}, we obtain that $\hat{g}_{\eta}$ is $S$-integrable. If $t\in{\mathcal{R}_{\eta}}$, the function $r_{t}(x)=g_{\eta}(x)exp_{\eta}(-\pi ixt)$ is $S$-integrable, by Theorem \ref{andcrit}(i), as $|r_{t}|\leq |g_{\eta}|$, and $g_{\eta}$ is $S$-integrable, by Theorem \ref{gSinteg}. Then, if $t$ is finite, we have;\\

${^{\circ}\hat{g}_{\eta}}(t)={^{\circ}\int_{\overline{{\mathcal R}_{\eta}}}g_{\eta}(x)exp_{\eta}(-\pi ixt)d\lambda_{\eta}(x)}$\\

$=\int_{\overline{{\mathcal R}_{\eta}}}{^{\circ}g_{\eta}}(x){^{\circ}exp_{\eta}}(-\pi ixt)dL(\lambda_{\eta})(x)$\\

$=\int_{x finite}st^{*}(g_{\infty})(x)exp_{\eta}(-\pi i{^{\circ} x}{^{\circ}t})dL(\lambda_{\eta})(x)$\\

$=\int_{x finite}st^{*}(g_{\infty}exp_{-\pi i{^{\circ}t}})(x)dL(\lambda_{\eta})(x)$\\

$=\int_{\mathcal R}g(x)exp(-\pi i{^{\circ}t}x)d\mu(x)=\hat{g}({^{\circ}t})=st^{*}(\hat{g}_{\infty})(t)$ $(**)$\\

using Definition \ref{transfdefs}, Theorem \ref{sintegspec}, Theorem \ref{gSinteg}, continuity of $exp$, see Theorem \ref{nscont}, and Theorem \ref{mmp}. Now suppose there exists $B\in L(\mathfrak{C}_{\eta})$, with $L(\lambda_{\eta})(B)>0$, such that ${^{\circ}\hat{g}_{\eta}}\neq st^{*}(\hat{g}_{\infty})$ on $B$. Then, by $(**)$, we can assume that $B\subset st^{-1}(\{-\infty,+\infty\})$, and $|{^{\circ}\hat{g}_{\eta}}|>0$ on $B$. We can, therefore, suppose that there exists a standard $n\in{\mathcal N}_{>0}$, with $|{^{\circ}\hat{g}_{\eta}}|>{1\over n}$, on $B$. Then for all finite $t'$, using \cite{cut}(Theorem 1.32);\\

${^{\circ}\int_{|t|>t'}|\hat{g}_{\eta}|(t)d\lambda_{\eta}(t)}$\\

$\geq\int_{|t|>t'}|{^{\circ}\hat{g}_{\eta}}|(t)dL(\lambda_{\eta})(t)>{1\over n}L(\lambda_{\eta})(B)$\\

By the Overflow principle, see \cite{cut}, we can find an infinite $L$ such that;\\

$\int_{|t|>L}|\hat{g}_{\eta}|(t)d\lambda_{\eta})(t)>{1\over 2n}L(\lambda_{\eta})(B)$\\

This contradicts $(*)$. Hence ${^{\circ}\hat{g}_{\eta}}=st^{*}(\hat{g}_{\infty})$ a.e $L(\lambda_{\eta})$. The rest of the proof is the same as Theorem \ref{gSinteg}.
\end{proof}

Finally, we have;\\

\begin{theorem}
\label{nspfit}
For $g\in S(\mathcal{R})$, the Fourier Inversion Theorem holds and admits a non standard proof.

\end{theorem}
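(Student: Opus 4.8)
The plan is to deduce the classical statement, Theorem \ref{fit}, from the hyperfinite inversion formula of Lemma \ref{nsit} by a descent through the standard part map. First I would fix an arbitrary standard $x\in\mathcal{R}$; since here $\omega=\eta^{2}$, we have $x\in\overline{{\mathcal R}_{\eta}}$, so Lemma \ref{nsit} applied to $f=g_{\eta}$, together with the observation that $\hat{f}_{\eta}=\hat{g}_{\eta}$ for this choice (Definition \ref{transfdefs}), gives
$$g_{\eta}(x)={1\over 2}\int_{\overline{{\mathcal R}_{\eta}}}\hat{g}_{\eta}(t)\,exp_{\eta}(\pi ixt)\,d\lambda_{\eta}(t).$$
The left hand side is handled at once: $g$ is continuous, so by Theorem \ref{nscont}, $g_{\eta}(x)={^{*}g}({[\eta x]\over\eta})\simeq g(x)$, whence ${^{\circ}g_{\eta}(x)}=g(x)$.

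All the content is on the right hand side, and the argument there runs parallel to the computation $(**)$ in the proof of Theorem \ref{transgSinteg}. By Theorem \ref{transgSinteg}, $\hat{g}_{\eta}$ is $S$-integrable on $\overline{{\mathcal R}_{\eta}}$, and since $|exp_{\eta}(\pi ixt)|\leq 1$ by transfer, the product $t\mapsto\hat{g}_{\eta}(t)\,exp_{\eta}(\pi ixt)$ is $S$-integrable as well, by the domination criterion Theorem \ref{andcrit}(i). Hence Theorem \ref{sintegspec} allows me to move the standard part inside the internal integral, and then I would use that ${^{\circ}\hat{g}_{\eta}}=st^{*}(\hat{g}_{\infty})$ almost everywhere $L(\lambda_{\eta})$ (Theorem \ref{transgSinteg}): on the finite points continuity of $exp$ gives ${^{\circ}exp_{\eta}(\pi ixt)}=exp(\pi ix\,{^{\circ}t})$, while on $st^{-1}(\{+\infty,-\infty\})$ the factor $st^{*}(\hat{g}_{\infty})$ is zero, so the integrand vanishes there $L(\lambda_{\eta})$-almost everywhere. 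Invoking that $st$ is measure preserving (Theorem \ref{mmp}), exactly as in Theorem \ref{transgSinteg}, the right hand side then reduces to ${1\over 2}\int_{\mathcal R}\hat{g}(t)\,exp(\pi ixt)\,d\mu(t)$.

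Equating the standard parts of the two sides yields $g(x)={1\over 2}\int_{-\infty}^{\infty}\hat{g}(t)e^{\pi ixt}\,dt$ for every standard $x\in\mathcal{R}$, which is precisely Theorem \ref{fit}; and since the entire chain --- the finite group inversion theorem Theorem \ref{pgit}, its transfer Lemma \ref{nsit}, and the Loeb-theoretic identifications in Theorems \ref{gSinteg}, \ref{transgSinteg}, \ref{mmp} --- is carried out by nonstandard methods, this is the promised nonstandard proof. I expect the only genuinely delicate step to be the passage of the standard part through the internal integral on the right: unlike $(**)$ of Theorem \ref{transgSinteg}, which integrated only over the finite part of $\overline{{\mathcal R}_{\eta}}$, here we integrate over all of $\overline{{\mathcal R}_{\eta}}$, so we need the \emph{global} $S$-integrability of $\hat{g}_{\eta}$ --- equivalently the tail bound $(*)$ proved in Theorem \ref{transgSinteg} --- to guarantee that the infinite part of $\overline{{\mathcal R}_{\eta}}$ contributes only an infinitesimal amount and no mass escapes to infinity in the limit.
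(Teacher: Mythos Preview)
Your proposal is correct and follows essentially the same route as the paper: apply Lemma \ref{nsit} to $g_{\eta}$, use the $S$-integrability of $\hat{g}_{\eta}$ from Theorem \ref{transgSinteg} together with Anderson's domination criterion (Theorem \ref{andcrit}(i)) to pass the standard part through the internal integral via Theorem \ref{sintegspec}, replace ${^{\circ}\hat{g}_{\eta}}$ by $st^{*}(\hat{g}_{\infty})$ a.e., and descend to $\mathcal{R}$ through Theorem \ref{mmp}. Your write-up is, if anything, more explicit than the paper's at the step you single out as delicate.
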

\begin{proof}
By Lemma \ref{nsit}, we have that;\\

$g_{\eta}(x)={1\over 2}\int_{\overline{\mathcal R}_{\eta}}\hat{g}_{\eta}(t)exp_{\eta}(\pi ixt)d\lambda_{\eta}(t)$ $(*)$\\

for $x\in\overline{{\mathcal R}_{\eta}}$. As in Theorem \ref{transgSinteg}, the function $s_{x}(t)=\hat{g}_{\eta}(t)exp_{\eta}(\pi ixt)$ is $S$-integrable, because, by the same theorem, $\hat{g}_{\eta}$ is $S$-integrable. We now argue as before, and use the result that ${^{\circ}{g}}_{\eta}=st^{*}(\hat{g}_{\infty})$, a.e $L(\lambda_{\eta})$. We have, if $x$ is standard, taking standard parts in $(*)$;\\

$g(x)={^{\circ}g_{\eta}}(x)={1\over 2}\int_{\overline{\mathcal R}_{\eta}}{^{\circ}\hat{g}_{\eta}}(t){^{\circ}exp_{\eta}}(\pi ixt)d L(\lambda_{\eta})(t)$\\

$={1\over 2}\int_{t finite}st^{*}(\hat{g}_{\infty})(t)exp_{\eta}(\pi ix{^{\circ}t})d L(\lambda_{\eta})(t)$\\

$={1\over 2}\int_{t finite}st^{*}(\hat{g}_{\infty}exp_{\pi ix})(t)d L(\lambda_{\eta})(t)$\\

$={1\over 2}\int_{\mathcal{R}}\hat{g}(t)exp(\pi ixt)d\mu(t)$\\

as required.

\end{proof}

\begin{section}{Appendix}

We collect, here, some results in standard and nonstandard analysis which are required in the main proof.\\

\begin{lemma}
\label{extension}

If $(X,\mathfrak{M},\mu)\subset(X,\mathfrak{M'},\mu')$ as standard measure spaces. Then, if $g$ is $\mathfrak{M}$-measurable, and integrable with respect to $(X,\mathfrak{M},\mu)$, then $g$ is integrable with respect to $(X,\mathfrak{M'},\mu')$, and;\\

$\int_{B}g d\mu=\int_{B}g d\mu'$\\

for any $B\in\mathfrak{M}$.
\end{lemma}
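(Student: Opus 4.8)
\textbf{Proof proposal for Lemma \ref{extension}.}

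The plan is to reduce the claim to the standard construction of the Lebesgue integral, exploiting that the $\mathfrak{M}$-measurable structure of $g$ is literally a sub-structure of the $\mathfrak{M}'$-measurable structure, and that $\mu$ and $\mu'$ agree on $\mathfrak{M}$. First I would reduce to the case of a nonnegative real-valued $g$: an arbitrary integrable $g$ can be written as $g = (u^{+} - u^{-}) + i(v^{+} - v^{-})$ with $u = \mathrm{Re}(g)$, $v = \mathrm{Im}(g)$, and each of the four pieces is a nonnegative $\mathfrak{M}$-measurable function dominated by $|g|$, hence integrable with respect to $(X,\mathfrak{M},\mu)$; if the identity of integrals is established for each piece, linearity of the integral over $(X,\mathfrak{M}',\mu')$ (and the finiteness of each piece's integral, which gives integrability of $g$ there) yields the general statement, first for $B = X$ and then for general $B \in \mathfrak{M}$ by replacing $g$ with $g\chi_{B}$, which is still $\mathfrak{M}$-measurable since $B \in \mathfrak{M}$.

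Next, for nonnegative $\mathfrak{M}$-measurable $g$, I would invoke the standard fact (e.g. \cite{Rud}, the construction preceding Definition 1.23) that there is an increasing sequence of nonnegative $\mathfrak{M}$-measurable simple functions $s_{k} \uparrow g$ pointwise. Each $s_{k} = \sum_{l} c_{l}\chi_{A_{l}}$ with $A_{l} \in \mathfrak{M}$; since $\mathfrak{M} \subseteq \mathfrak{M}'$ and $\mu'|_{\mathfrak{M}} = \mu$, we have $\mu(A_{l}) = \mu'(A_{l})$ for each $l$, so $\int_{X} s_{k}\, d\mu = \sum_{l} c_{l}\mu(A_{l}) = \sum_{l} c_{l}\mu'(A_{l}) = \int_{X} s_{k}\, d\mu'$. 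The monotone convergence theorem, applied in each of the two measure spaces separately, then gives $\int_{X} g\, d\mu = \lim_{k} \int_{X} s_{k}\, d\mu = \lim_{k} \int_{X} s_{k}\, d\mu' = \int_{X} g\, d\mu'$. In particular the right-hand integral is finite, so $g$ is integrable with respect to $(X,\mathfrak{M}',\mu')$.

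There is essentially no serious obstacle here; the only point requiring a little care is the logical one of making sure the approximating simple functions are chosen to be $\mathfrak{M}$-measurable (not merely $\mathfrak{M}'$-measurable) so that the equality $\mu(A_{l}) = \mu'(A_{l})$ is available — this is automatic from the usual construction, which builds $s_{k}$ out of preimages $g^{-1}$ of dyadic intervals, and these lie in $\mathfrak{M}$ precisely because $g$ is assumed $\mathfrak{M}$-measurable. After that it is just monotone convergence and linearity, applied twice. I would close by remarking that the same argument handles the localization to $B \in \mathfrak{M}$ verbatim, since all the relevant sets stay inside $\mathfrak{M}$.
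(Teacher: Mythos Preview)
Your proposal is correct and follows essentially the same route as the paper: verify the identity for $\mathfrak{M}$-measurable simple functions, reduce the general case to nonnegative $g$, approximate by an increasing sequence of $\mathfrak{M}$-simple functions (\cite{Rud}, Theorem~1.17), and apply the Monotone Convergence Theorem in each space. Your write-up is more explicit about the decomposition into real/imaginary and positive/negative parts and about the localisation to $B\in\mathfrak{M}$ via $g\chi_{B}$, but the underlying argument is the same.
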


\begin{proof}
 First check the result for $\mathfrak{M}$-measurable simple functions, $(*)$. Then, without loss of generality, assume $g\geq 0$. $g$ can be written as in increasing limit of simple $\mathfrak{M}$-measurable functions, see \cite{Rud}( Theorem 1.17). Now apply the Monotone Convergence Theorem, see \cite{Rud}(Theorem 1.26), and $(*)$, to obtain the result. \end{proof}

 \begin{lemma}{Change of Variables}
 \label{cov}
 If $\tau:(X_{1},\mathfrak{C}_{1},\mu_{1})\rightarrow (X_{2},\mathfrak{C}_{2},\mu_{2})$ is measurable and measure preserving, so $\mu_{2}=\tau_{*}\mu_{1}$, then a function $\theta\in L^{1}(X_{2},\mathfrak{C}_{2},\mu_{2})$ iff $\tau^{*}\theta\in L^{1}(X_{1},\mathfrak{C}_{1},\mu_{1})$ and then;\\

  $\int_{C} \theta d\tau_{*}\mu_{1}=\int_{\tau^{-1}(C)} \tau^{*}\theta d\mu_{1}$\\

for $C\in\mathfrak{C}_{2}$.

\end{lemma}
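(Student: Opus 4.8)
The plan is to use the standard ``bootstrapping'' argument of measure theory: prove the identity first for characteristic functions, extend by linearity to nonnegative simple functions, pass to arbitrary nonnegative measurable functions via the Monotone Convergence Theorem, and finally reach an arbitrary integrable $\theta$ by decomposing it into nonnegative pieces. Throughout, the key algebraic facts are that the pullback is $\tau^{*}\theta=\theta\circ\tau$, that $\tau^{*}\chi_{A}=\chi_{\tau^{-1}(A)}$ for $A\in\mathfrak{C}_{2}$, and that $\tau^{*}$ commutes with the pointwise operations of addition, scalar multiplication, absolute value, taking positive and negative parts, and increasing pointwise limits. Measurability of $\tau^{*}\theta$, whenever $\theta$ is $\mathfrak{C}_{2}$-measurable, follows immediately from the measurability of $\tau$.

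First I would establish the base case. For $A,C\in\mathfrak{C}_{2}$, the hypothesis $\mu_{2}=\tau_{*}\mu_{1}$ gives $\mu_{2}(A)=\mu_{1}(\tau^{-1}(A))$, and since $\tau^{-1}(A\cap C)=\tau^{-1}(A)\cap\tau^{-1}(C)$ we obtain
\[
\int_{C}\chi_{A}\,d\mu_{2}=\mu_{2}(A\cap C)=\mu_{1}(\tau^{-1}(A\cap C))=\int_{\tau^{-1}(C)}\chi_{\tau^{-1}(A)}\,d\mu_{1}=\int_{\tau^{-1}(C)}\tau^{*}\chi_{A}\,d\mu_{1},
\]
which is the asserted identity for $\theta=\chi_{A}$. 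By linearity of the integral and of $\tau^{*}$, it extends to every nonnegative $\mathfrak{C}_{2}$-measurable simple function.

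Next I would treat an arbitrary nonnegative $\mathfrak{C}_{2}$-measurable $\theta$. Writing $\theta$ as the increasing pointwise limit of nonnegative simple functions $s_{k}\uparrow\theta$, see \cite{Rud}(Theorem 1.17), the functions $\tau^{*}s_{k}=s_{k}\circ\tau$ increase pointwise to $\tau^{*}\theta$. Applying the Monotone Convergence Theorem, see \cite{Rud}(Theorem 1.26), on $(X_{2},\mathfrak{C}_{2},\mu_{2})$ and on $(X_{1},\mathfrak{C}_{1},\mu_{1})$ to the two sides of the simple-function identity (with $C$ fixed, and noting $\tau^{*}(s_{k}\chi_{C})=(\tau^{*}s_{k})\chi_{\tau^{-1}(C)}$) yields $\int_{C}\theta\,d\mu_{2}=\int_{\tau^{-1}(C)}\tau^{*}\theta\,d\mu_{1}$ in $[0,\infty]$. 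Taking $C=X_{2}$ and applying this to $|\theta|$, together with $\tau^{*}|\theta|=|\tau^{*}\theta|$, shows $\int_{X_{2}}|\theta|\,d\mu_{2}=\int_{X_{1}}|\tau^{*}\theta|\,d\mu_{1}$, which is precisely the equivalence $\theta\in L^{1}(\mu_{2})\Leftrightarrow\tau^{*}\theta\in L^{1}(\mu_{1})$.

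Finally, for a general integrable $\theta$ I would split it into real and imaginary parts and each of those into positive and negative parts, obtaining four nonnegative measurable summands, each with finite integral by the $L^{1}$ equivalence just proved. Applying the nonnegative identity to each summand and recombining by linearity of $\tau^{*}$ and of the integral gives the desired equality for $\theta$. The argument involves no genuine obstacle; the only points requiring care are the verification of the base case and the bookkeeping ensuring that $\tau^{*}$ commutes with restriction to $C$, with the four-fold decomposition, and with the monotone limits, so that the Monotone Convergence Theorem applies simultaneously on both measure spaces.
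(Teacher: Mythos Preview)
Your argument is correct and is exactly the standard bootstrapping route (characteristic functions $\to$ simple functions $\to$ nonnegative measurable via Monotone Convergence $\to$ integrable via the four-part decomposition) that the paper has in mind. The paper itself does not spell out any of this: its proof reads in full ``This is a simple exercise, using the abstract definition of integration on measure spaces, see \cite{Rud}.'' So your write-up supplies precisely the details the paper omits, and there is no divergence in approach to discuss.
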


\begin{proof}
This is a simple exercise, using the abstract definition of integration on measure spaces, see \cite{Rud}.
\end{proof}

\begin{defn}
\label{internint}
If $X$ is a hyperfinite interval, that is $X=\{x\in{^{*}\mathcal{R}}:{a\over\eta}\leq x<{b\over\eta}\}$, where $a,b\in{^{*}\mathcal{Z}}$, $\mathfrak{A}$ is the set of all internal unions of intervals of the form $[{j\over\eta},{j+1\over\eta})$, where $a\leq j<b$, $j\in{^{*}\mathcal{Z}}$ and $\nu$ is the counting measure given by $\nu([{j\over\eta},{j+1\over\eta}))={1\over \eta}$, then the internal integral takes the form;\\

$\int_{X} f d\nu={1\over\eta}\sum_{j=a}^{b-1}f({j\over\eta})$\\

 Some authors, see \cite{cut}, prefer to use a discrete version of the hyperfinite interval, in which $X=\{{j\over\eta}:a\leq j<b, j\in{^{*}\mathcal{Z}}\}$, $\mathfrak{A}$ is the set of internal subsets, and $\nu$ is the counting measure given by $\nu(x)={1\over\eta}$, for $x\in X$. Of course the two interpretations are equivalent and the internal integral takes the same form.\\

\end{defn}

\begin{theorem}
\label{stlim}
Let $(s_{n})_{n\in{\mathcal N}}$ be a standard infinite sequence, then the following are equivalent;\\

$(i)$. $lim_{n\rightarrow\infty}s_{n}=s$.\\

$(ii)$. $s_{n}\simeq s$ for all infinite $n$.\\

\end{theorem}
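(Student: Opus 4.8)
The plan is to prove the two implications separately, moving between the standard sequence $(s_n)_{n\in\mathcal{N}}$ and its internal extension $({}^*s_n)_{n\in{}^*\mathcal{N}}$ via the transfer principle. The key point is that, because $(s_n)$ is standard, its transfer ${}^*s$ is an internal function defined at \emph{every} hypernatural index, in particular at the infinite ones, so the statement $s_n\simeq s$ ``for all infinite $n$'' is meaningful.

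For $(i)\Rightarrow(ii)$, I would argue directly. Fix a standard $\epsilon>0$. The hypothesis $\lim_{n\to\infty}s_n=s$ supplies a standard $N\in\mathcal{N}$ such that the sentence $(\forall n>N)\,|s_n-s|<\epsilon$ holds in the standard structure. Applying transfer, the same sentence holds in ${}^*\mathcal{R}$ for all $n\in{}^*\mathcal{N}$ with $n>N$. Since every infinite hypernatural exceeds the standard $N$, we obtain $|{}^*s_n-s|<\epsilon$ for all infinite $n$. As $\epsilon>0$ was an arbitrary standard positive real, this shows ${}^*s_n-s$ is infinitesimal, that is $s_n\simeq s$ for all infinite $n$.

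For $(ii)\Rightarrow(i)$, I would argue by contradiction. If $\lim_{n\to\infty}s_n\neq s$, then there is a standard $\epsilon>0$ for which the set $A=\{n\in\mathcal{N}:|s_n-s|\geq\epsilon\}$ is unbounded in $\mathcal{N}$; equivalently the standard sentence $(\forall m)(\exists n>m)(|s_n-s|\geq\epsilon)$ holds. Transferring this sentence and instantiating $m$ at an infinite hypernatural produces an infinite $n$ with $|{}^*s_n-s|\geq\epsilon$. This contradicts $(ii)$, since $s_n\simeq s$ forces $|{}^*s_n-s|<\epsilon$ for every standard $\epsilon>0$.

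The only delicate step, and the one I would treat most carefully, is the passage in the second implication from ``$A$ is infinite'' to ``there is an infinite index $n$ with $|{}^*s_n-s|\geq\epsilon$''. I would phrase the unboundedness of $A$ as a genuine first-order sentence over the standard structure so that transfer applies cleanly, and then verify that the witness $n>m$ is necessarily infinite once $m$ is chosen infinite. Everything else is a routine application of transfer together with the characterisation of infinitesimals as those reals smaller in absolute value than every standard positive real.
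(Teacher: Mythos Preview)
Your argument is correct and is the standard proof of this classical nonstandard characterisation of sequential limits. The paper itself does not give a proof of this theorem at all: it simply cites Robinson's \emph{Non-Standard Analysis}, where the argument that appears is essentially the one you have written (transfer the $\epsilon$--$N$ statement in one direction, and in the other direction transfer the unboundedness of the set of ``bad'' indices to produce an infinite witness). So there is nothing to compare beyond noting that you have supplied the details the paper omits.
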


\begin{proof}
See \cite{Rob}.
\end{proof}

The following is a slight generalisation of Theorem 3.5.13 of \cite{Rob};\\

\begin{theorem}
\label{intlim}
Let $(s_{n})_{n\in{^{*}{\mathcal N}}}$ be an internal sequence, enumerated by an internal $g$, not necessarily the transfer of a standard one. Suppose there exists an infinite $\omega\in{^{*}{\mathcal N}}$ with $s_{\omega'}\simeq 0$, for all infinite $\omega'$ with $\omega'<\omega$, then;\\

$lim_{n\rightarrow\infty}(^{\circ}s_{n})=0$ in the standard sense.\\

\end{theorem}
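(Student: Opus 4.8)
The plan is to prove this by an underspill (underflow) argument, which is the natural dual to the overspill used elsewhere. Fix a standard $\epsilon>0$. Since $\lim_{n\to\infty}({^{\circ}s_{n}})=0$ in the standard sense means precisely that for every standard $\epsilon>0$ there is a standard $N$ with $|{^{\circ}s_{n}}|\leq\epsilon$ for all standard $n\geq N$, it suffices to produce, for each such $\epsilon$, a standard $N=N(\epsilon)$ such that $|s_{n}|<\epsilon$ for all standard $n\geq N$. (This also certifies that ${^{\circ}s_{n}}$ is defined for all large finite $n$, since then $s_{n}$ is finite; the finitely many smaller terms do not affect the limit.)

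The key step is to introduce the internal set
\[
D=\{\,j\in{^{*}{\mathcal N}}:\ \forall m\,(j\leq m<\omega\ \rightarrow\ |s_{m}|<\epsilon)\,\}.
\]
Here $D$ is internal by the internal definition principle, since it is defined from the internal enumeration $g$ (with $s_{m}=g(m)$) together with the internal parameters $\omega$ and $\epsilon$. I would first record two structural facts. First, $D$ is upward closed: if $j\in D$ and $j'\geq j$, then for any $m$ with $j'\leq m<\omega$ we also have $j\leq m<\omega$, whence $|s_{m}|<\epsilon$, so $j'\in D$. Second, $D$ contains every infinite natural. Indeed, if $j<\omega$ is infinite, then any $m$ with $j\leq m<\omega$ is itself infinite and below $\omega$, so by hypothesis $s_{m}\simeq 0$, giving $|s_{m}|<\epsilon$; and if $j\geq\omega$ is infinite, then the defining implication in $D$ is vacuously satisfied. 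Thus $D$ contains all infinite indices, which is exactly where the hypothesis $s_{\omega'}\simeq 0$ (for infinite $\omega'<\omega$) is consumed.

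Now I would invoke the internal least-number principle, that is, the transfer of the well-ordering of ${\mathcal N}$: the nonempty internal set $D$ has a least element $\mu=\min D$. Since every infinite natural lies in $D$, we have $\mu\leq\omega'$ for every infinite $\omega'$; hence $\mu$ must be \emph{finite}, for if $\mu$ were infinite then $\mu-1$ would be infinite as well, would lie in $D$, and would contradict minimality. Set $N=\mu\in{\mathcal N}$. By upward closure, every standard $n\geq N$ belongs to $D$, and taking $m=n$ in the defining condition (legitimate since a standard $n$ satisfies $n<\omega$) yields $|s_{n}|<\epsilon$, hence $|{^{\circ}s_{n}}|\leq\epsilon$. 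As $\epsilon>0$ was an arbitrary standard real, this establishes $\lim_{n\to\infty}({^{\circ}s_{n}})=0$.

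The argument is short and the only genuinely delicate point is bookkeeping around the bound $\omega$: the hypothesis controls $s_{\omega'}$ only for infinite $\omega'<\omega$, so one must cut the defining condition of $D$ at $m<\omega$ and verify that the indices $j\geq\omega$ are captured vacuously rather than by any appeal to $s_{\omega}$ itself. Once $D$ is seen to be internal, upward closed, and to contain all infinite naturals, the passage to a finite threshold via $\min D$ is forced, and this is the mechanism that upgrades an infinitesimal-tail condition at infinite indices into genuine standard convergence at finite ones.
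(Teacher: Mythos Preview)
Your proof is correct and follows essentially the same route as the paper: both fix a standard $\epsilon>0$, form the internal set $\{j\in{^{*}{\mathcal N}}:\forall m\,(j\leq m<\omega\rightarrow |g(m)|<\epsilon)\}$, observe it contains all infinite naturals, and extract a finite threshold. The only cosmetic difference is that the paper names this step the \emph{underflow principle}, while you phrase it via the internal least-element principle; these are equivalent here, and your handling of the vacuous case $j\geq\omega$ is a bit more explicit than the paper's.
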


\begin{proof}
Let $\epsilon>0$ be standard, then $|g(\omega')|<\epsilon$ for all infinite $\omega'$ with $\omega'<\omega$. Let;\\

$A=\{m\in{^{*}{\mathcal N}}:|g(n)|<\epsilon, if\ m\leq n<\omega\}$\\

Then $A$ is internal and contains arbitrarily small positive infinite numbers. By the underflow principle, see \cite{cut}, it contains a positive finite number $m_{0}\in{\mathcal N}$. In particular, $|g(n)|<\epsilon$, for all $n\in{\mathcal N}$, with $n\geq m_{0}$. It follows that $|^{\circ}(g(n))|\leq\epsilon$, for all $n\in{\mathcal N}$, with $n\geq m_{0}$. Hence, as $\epsilon$ was arbitrary, the result follows.
\end{proof}

\begin{theorem}
\label{nscont}

Let $f:{\mathcal R}\rightarrow{\mathcal R}$ be a standard function, and $b\in{\mathcal R}$, then the following are equivalent;\\

(i). $f$ is continuous at $b$.\\

(ii). ${^{*}f}(x)\simeq {^{*}f}(b)$ for all $x\in\mu(b)$.\\

 where $\mu(b)$ is the monad of $b$. (\footnote{\label{nscontfin} This is often applied in the following form; if $x\simeq y$ belong to ${^{*}{\mathcal R}}_{fin}$, then ${^{*}f}(x)\simeq{^{*}f}(y)$. This follows from the facts that $x$ and $y$ have a standard part in ${\mathcal R}$, and $\simeq$ is an equivalence relation.}).\\

\end{theorem}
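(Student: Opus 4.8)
The plan is to establish the equivalence by applying the transfer principle to the standard $\epsilon$--$\delta$ formulation of continuity, treating the two directions separately. Throughout, recall that $x\in\mu(b)$ means $|x-b|$ is infinitesimal, and that $u\simeq v$ means $u-v$ is infinitesimal; the key bridge between the standard and nonstandard pictures is that a nonnegative quantity is infinitesimal precisely when it is smaller than every standard positive real.

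For (i) $\Rightarrow$ (ii), I would fix an arbitrary standard $\epsilon>0$ and use continuity of $f$ at $b$ to obtain a standard $\delta>0$ with
\[
\mathcal{R}\models(\forall y)(|y-b|<\delta\rightarrow|f(y)-f(b)|<\epsilon).
\]
Transferring this sentence, whose parameters $b,\delta,\epsilon$ are all standard, gives
\[
{^{*}\mathcal{R}}\models(\forall y)(|y-b|<\delta\rightarrow|{^{*}f}(y)-{^{*}f}(b)|<\epsilon).
\]
Now if $x\in\mu(b)$, then $|x-b|$ is infinitesimal, hence $|x-b|<\delta$ since $\delta$ is a standard positive, so $|{^{*}f}(x)-{^{*}f}(b)|<\epsilon$. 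As $\epsilon$ was an arbitrary standard positive, $|{^{*}f}(x)-{^{*}f}(b)|$ is smaller than every standard positive, hence infinitesimal, i.e. ${^{*}f}(x)\simeq{^{*}f}(b)$.

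For (ii) $\Rightarrow$ (i), I would again fix an arbitrary standard $\epsilon>0$ and produce a standard $\delta$ witnessing the $\epsilon$-continuity condition by first verifying its nonstandard analogue and then transferring downward. Choosing any positive infinitesimal $\delta_{0}$, observe that every $y$ with $|y-b|<\delta_{0}$ lies in $\mu(b)$, so by hypothesis (ii) we have ${^{*}f}(y)\simeq{^{*}f}(b)$, whence $|{^{*}f}(y)-{^{*}f}(b)|$ is infinitesimal and in particular $<\epsilon$. Thus $\delta_{0}$ witnesses
\[
{^{*}\mathcal{R}}\models(\exists\delta>0)(\forall y)(|y-b|<\delta\rightarrow|{^{*}f}(y)-{^{*}f}(b)|<\epsilon).
\]
Since this is the transfer of a first-order sentence with standard parameters $b,\epsilon$, the transfer principle yields the same existential sentence in $\mathcal{R}$, producing a genuine standard $\delta>0$ with the required property. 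As $\epsilon$ was arbitrary, $f$ is continuous at $b$.

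The routine direction is (i) $\Rightarrow$ (ii), a direct forward transfer. The step deserving care, and the main obstacle, is (ii) $\Rightarrow$ (i): one must resist trying to extract $\delta$ constructively and instead observe that in ${^{*}\mathcal{R}}$ an infinitesimal $\delta_{0}$ already satisfies the $\epsilon$-condition, so the existential statement holds nonstandardly and can be pulled back by transfer to a standard witness. The crucial conceptual point is that transfer is a biconditional, so the existence of a (possibly nonstandard) witness $\delta$ over the standard parameters $b,\epsilon$ forces the existence of a standard one; this is exactly what converts the infinitesimal-neighbourhood statement into the classical $\epsilon$--$\delta$ statement.
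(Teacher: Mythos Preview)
Your argument is correct and is exactly the classical Robinson proof of the nonstandard characterisation of continuity. Note, however, that the paper does not actually supply a proof of this theorem: it is listed in the Appendix as a background result and stated without proof (the surrounding appendix items either cite \cite{Rob} or give short arguments, and this one is simply asserted). So there is no paper proof to compare against; your write-up is a faithful rendition of the standard transfer-based argument one finds in \cite{Rob}.
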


\begin{defn}{Anderson}\\
\label{finite}

 Let $(X,\mathfrak{A},\nu)$ be an internal measure space, in the sense of \cite{Loeb}, and let $f:X\rightarrow {^{*}{\mathcal R}}$ be ${\mathfrak A}$-measurable. Then we say $f$ is finite if;\\

 (i). There exists an $n\in{\mathcal N}$, with $|f(x)|<n$, for all $x\in X$.\\

 (ii). $f$ is supported on a set $A$ with $\nu(A)$ finite.\\

 \end{defn}

 We have;\\

 \begin{theorem}{Anderson's Criteria}\\
\label{andcrit}

 Let $(X,\mathfrak{A},\nu)$ be as in Definition \ref{finite} and let $f:X\rightarrow^{*}{\mathcal R}$ be ${\mathfrak A}$-measurable.\\

 $(i)$. If $F$ is $S$-integrable, with $|f|\leq F$, then $f$ is $S$-integrable.\\

 $(ii)$. If ${\mathfrak A}$ is a $^{*}{\sigma}$-algebra, then $f$ is $S$-integrable iff there exists a sequence of finite functions $(f_{n})_{n\in\mathcal{N}}$ such that;\\

 $^{\circ}(\int_{X}|f-f_{n}| d\nu)\rightarrow 0$ as $n\rightarrow\infty$.\\

 \end{theorem}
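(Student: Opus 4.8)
The plan is to derive both parts from the defining clauses of $S$-integrability in \cite{cut}: that $\int_X|f|\,d\nu$ is finite; that $\int_A|f|\,d\nu\simeq 0$ whenever $A\in\mathfrak{A}$ has $\nu(A)\simeq 0$; that $\int_A|f|\,d\nu\simeq 0$ whenever $|f|$ is infinite on $A$; and, since $\nu(X)$ is infinite in the present application, that for each standard $\epsilon>0$ the mass of $f$ off some finite-measure set is below $\epsilon$. For $(i)$ I would push the hypothesis $|f|\le F$ through each clause by monotonicity of the internal integral: finiteness follows from $\int_X|f|\,d\nu\le\int_X F\,d\nu$; for $\nu(A)\simeq 0$ we get $\int_A|f|\,d\nu\le\int_A F\,d\nu\simeq 0$; if $|f|$ is infinite on $A$ then so is $F$, so again $\int_A|f|\,d\nu\le\int_A F\,d\nu\simeq 0$; and a finite-measure set carrying the mass of $F$ carries that of $f$. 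Thus $f$ inherits $S$-integrability and this part is immediate.

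For the sufficiency direction of $(ii)$ I would first note that a finite function $h$ (bounded by a standard $m$ and supported on $A$ with $\nu(A)$ finite, as in Definition \ref{finite}) is trivially $S$-integrable, since $\int_X|h|\,d\nu\le m\,\nu(A)$, $\int_B|h|\,d\nu\le m\,\nu(B)\simeq 0$ for $\nu(B)\simeq 0$, the set $\{|h|\ge N\}$ is empty for infinite $N$, and $h$ vanishes off the finite-measure set $A$. I would then show that $S$-integrability is closed under the stated convergence by an $\epsilon/\delta$ comparison: given standard $\delta$, choose $n$ with ${}^\circ\!\int_X|f-f_n|\,d\nu<\delta$ and bound each clause for $f$ by the same clause for $f_n$ plus the error $\int_X|f-f_n|\,d\nu$. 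The null-set clause and finiteness are immediate; concentration follows because $f_n$ vanishes off a finite-measure set; and the infinite-value clause uses Chebyshev, $\nu(\{|f|\ge N\})\le N^{-1}\int_X|f|\,d\nu\simeq 0$, so that the bounded $f_n$ contributes infinitesimally on $\{|f|\ge N\}$. Letting $\delta\to 0$ delivers each clause for $f$.

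For the necessity direction I would truncate $f$ simultaneously in amplitude and in support. The hypothesis that $\mathfrak{A}$ is a ${}^*\sigma$-algebra is used here to guarantee $\{|f|\le n\}\in\mathfrak{A}$ and that the truncations are $\mathfrak{A}$-measurable. Using the concentration clause I would choose, for each $k\in\mathcal{N}$, a finite-measure $A_k\in\mathfrak{A}$ with $\int_{X\setminus A_k}|f|\,d\nu<1/k$, set $B_n=A_1\cup\cdots\cup A_n$, and define $f_n=f\,\chi_{\{|f|\le n\}\cap B_n}$, which is finite. The error splits as $\int_X|f-f_n|\,d\nu\le\int_{\{|f|>n\}}|f|\,d\nu+\int_{X\setminus B_n}|f|\,d\nu$; the second term is below $1/n$, and for the first I would apply Theorem \ref{intlim} to the internal decreasing sequence $s_n=\int_{\{|f|\ge n\}}|f|\,d\nu$, which is infinitesimal at every infinite index by the infinite-value clause, to conclude $\lim_{n\to\infty}{}^\circ s_n=0$. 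Hence ${}^\circ\!\int_X|f-f_n|\,d\nu\to 0$.

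I expect the necessity direction to be the main obstacle, and within it the amplitude tail: passing from ``$\int_{\{|f|\ge N\}}|f|\,d\nu\simeq 0$ for all infinite $N$'' to ``$\lim_{n\to\infty}{}^\circ\!\int_{\{|f|\ge n\}}|f|\,d\nu=0$'' is exactly the underflow packaged in Theorem \ref{intlim}, applied to an internally defined (not transferred) sequence, which is precisely why that theorem was stated in the required generality. The only remaining delicate point is the bookkeeping of the finite-measure exhaustion, ensuring that the $B_n$ are genuine internal members of $\mathfrak{A}$ of finite measure and that $f_n$ is therefore finite in the sense of Definition \ref{finite}.
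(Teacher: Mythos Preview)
The paper does not supply its own proof of this theorem: its entire argument is the citation ``See \cite{and}.'' So there is no approach in the paper to compare against; you have reconstructed a proof where the author chose only to quote Anderson.

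Your sketch is essentially the standard argument and is sound. A few remarks on points you flagged or that deserve care. In the sufficiency direction of $(ii)$, your treatment of the infinite-value clause is correct once spelled out: for fixed infinite $N$ and given standard $\delta$, pick $n$ with $\int_X|f-f_n|\,d\nu<\delta$; then $\int_{\{|f|\ge N\}}|f|\,d\nu\le m_n\,\nu(\{|f|\ge N\})+\delta$, and Chebyshev together with the already-established finiteness of $\int_X|f|\,d\nu$ makes the first summand infinitesimal; as $\delta$ was arbitrary the conclusion follows. In the necessity direction, your use of Theorem~\ref{intlim} is exactly right and is indeed the reason that theorem is stated for internal (not merely transferred) sequences: the map $n\mapsto\int_{\{|f|\ge n\}}|f|\,d\nu$ is internal by the internal definition principle, once one knows $\{|f|\ge n\}\in\mathfrak{A}$, which is where the ${}^{*}\sigma$-algebra hypothesis enters. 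Your concern about the sets $B_n=A_1\cup\cdots\cup A_n$ is unfounded: although the \emph{sequence} $(A_k)_{k\in\mathcal N}$ is chosen externally, each $B_n$ is a finite union of internal sets and hence internal, lies in $\mathfrak{A}$, and has finite $\nu$-measure, so $f_n=f\,\chi_{\{|f|\le n\}\cap B_n}$ is finite in the sense of Definition~\ref{finite}. The only caveat is that what you call the ``concentration clause'' is not universally listed among the defining conditions of $S$-integrability; in Cutland's formulation for infinite-measure spaces it is, so since you are working from \cite{cut} this is legitimate, but it would be worth stating explicitly which characterisation you are invoking.
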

 \begin{proof}
 See \cite{and}.

 \end{proof}

 \begin{theorem}
 \label{sintegspec}
Let $(X,\mathfrak{A},\nu)$ be as in Definition \ref{finite}, let $(X,L(\mathfrak{A}),L(\nu))$ be the corresponding Loeb space and let $f:X\rightarrow^{*}{\mathcal R}$ be ${\mathfrak A}$-measurable. Then the following are equivalent;\\

 $(i)$. $f$ is $S$-integrable.\\

 $(ii)$. ${^{\circ}f}$ is integrable with respect to $L(\nu)$, and;\\

 $^{\circ}\int_{A} f d\nu = \int_{A}{^{\circ}f} d L(\nu)$ for any $A\in{\mathfrak A}$.\\

\end{theorem}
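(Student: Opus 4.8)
The plan is to prove the equivalence by first settling the case of \emph{finite} functions in the sense of Definition \ref{finite}, where the identity can be verified directly, and then bootstrapping to the general case using Anderson's Criteria, Theorem \ref{andcrit}(ii), together with the countable additivity and completeness of the Loeb space. Throughout I would reduce to $f\geq 0$ (splitting into positive and negative parts) and, in the direction (i)$\Rightarrow$(ii), to $A=X$: once the identity at $A=X$ is known for every $S$-integrable function, the identity for general $A$ follows by applying it to $f\chi_A$, which is again $S$-integrable by Theorem \ref{andcrit}(i), with ${}^{\circ}(f\chi_A)=({}^{\circ}f)\chi_A$.

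The base case to prove is that if $f$ is finite, say $|f|<n$ for a standard $n$ and $f$ supported on $S_0$ with $\nu(S_0)$ finite, then ${}^{\circ}f$ is $L(\nu)$-integrable and ${}^{\circ}\int_A f\,d\nu=\int_A {}^{\circ}f\,dL(\nu)$ for all $A\in\mathfrak{A}$. First I would partition the range $[-n,n]$ into ${}^{*}$-finitely many cells of infinitesimal width and let $s$ be the internal simple function taking the left endpoint of the cell containing $f(x)$; then $|f-s|\simeq 0$ everywhere, and since $\nu(S_0)$ is finite, $\int_A|f-s|\,d\nu\simeq 0$, so both sides are unaffected on replacing $f$ by $s$. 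For an internal simple $s=\sum c_j\chi_{A_j}$ with $A_j\in\mathfrak{A}$ one has $\int_A s\,d\nu=\sum c_j\nu(A_j\cap A)$, and I would match this with $\int_A {}^{\circ}s\,dL(\nu)$ by approximating ${}^{\circ}s$ by a standard simple function of standard mesh $\delta$, using the defining property $L(\nu)(C)={}^{\circ}\nu(C)$ on $\mathfrak{A}$, and letting $\delta\to 0$. The hard part here is controlling the boundary sets on which $s$ lies within an infinitesimal of a partition point; this is where the bulk of the estimation lies, and it is the step I expect to be the main obstacle.

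For (i)$\Rightarrow$(ii), Theorem \ref{andcrit}(ii) supplies finite functions $f_k$ with ${}^{\circ}\int|f-f_k|\,d\nu\to 0$. Since each difference $f_k-f_l$ is finite, the base case gives $\int|{}^{\circ}f_k-{}^{\circ}f_l|\,dL(\nu)={}^{\circ}\int|f_k-f_l|\,d\nu$, so $({}^{\circ}f_k)$ is Cauchy in $L^1(L(\nu))$ and converges to some $h$. Passing to a subsequence along which $\int|f-f_k|\,d\nu$ is summable and applying the Borel--Cantelli lemma in the Loeb space, I would deduce $|f-f_k|\to 0$, hence ${}^{\circ}f_k\to{}^{\circ}f$, $L(\nu)$-almost everywhere, which identifies $h={}^{\circ}f$ a.e.\ and shows ${}^{\circ}f\in L^1(L(\nu))$. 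The equality at $A=X$ then follows by taking limits, combining the base case on each $f_k$ with the bound $|{}^{\circ}\int f\,d\nu-{}^{\circ}\int f_k\,d\nu|\leq{}^{\circ}\int|f-f_k|\,d\nu\to 0$.

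For the converse (ii)$\Rightarrow$(i), I would approximate ${}^{\circ}f\in L^1(L(\nu))$ in $L^1$-norm by truncations $({}^{\circ}f\wedge n)\chi_S$ with $S$ of finite Loeb measure, lift each to an internal finite function $f_n$ whose standard part is the chosen truncation, and then use the hypothesis together with the base case. Splitting $X$ into the sets $\{f\geq f_n\}$ and $\{f<f_n\}$, both in $\mathfrak{A}$, turns $|f-f_n|$ into a signed combination on which the identities for $f$ (hypothesis) and for $f_n$ (base case) both apply, giving ${}^{\circ}\int|f-f_n|\,d\nu=\int|{}^{\circ}f-{}^{\circ}f_n|\,dL(\nu)\to 0$; Theorem \ref{andcrit}(ii) then yields that $f$ is $S$-integrable. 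Besides the base case, the delicate point is the almost-everywhere identification of the $L^1$-limit with ${}^{\circ}f$ in the (i)$\Rightarrow$(ii) direction, which is precisely where the countable additivity of the Loeb measure is essential.
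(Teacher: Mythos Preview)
The paper does not prove this result: its entire proof reads ``See \cite{and} or Theorem 3.24 of \cite{deP}.'' Your proposal therefore supplies far more than the paper does, and your outline is essentially Anderson's original argument and is correct in structure.

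One simplification worth noting: the base case is cleaner than you fear. Taking a \emph{standard} mesh $\epsilon>0$ from the outset, the sets $A_k=f^{-1}\big([k\epsilon,(k+1)\epsilon)\big)$ lie in $\mathfrak{A}$, the internal simple function $s_\epsilon=\sum_k k\epsilon\,\chi_{A_k}$ is a \emph{finite} sum with \emph{standard} values, and one reads off ${}^{\circ}\!\int_A s_\epsilon\,d\nu=\sum_k k\epsilon\,{}^{\circ}\nu(A_k\cap A)=\int_A {}^{\circ}s_\epsilon\,dL(\nu)$ directly from $L(\nu)|_{\mathfrak{A}}={}^{\circ}\nu$, with no ``boundary'' estimation needed; since $|f-s_\epsilon|\leq\epsilon$ on a set of finite $\nu$-measure, letting $\epsilon\to 0$ gives the base identity. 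The detour through a ${}^{*}$-finite infinitesimal mesh buys nothing, because ${}^{\circ}s={}^{\circ}f$ anyway. In (i)$\Rightarrow$(ii) you can also replace the Borel--Cantelli step by convergence in measure: $\{|{}^{\circ}f-{}^{\circ}f_k|>\epsilon\}\subseteq\{|f-f_k|>\epsilon\}\in\mathfrak{A}$ has $L(\nu)$-measure at most $\epsilon^{-1}\,{}^{\circ}\!\int|f-f_k|\,d\nu\to 0$, which already identifies the $L^1$-limit of $({}^{\circ}f_k)$ with ${}^{\circ}f$ almost everywhere. The (ii)$\Rightarrow$(i) direction, splitting over $\{f\geq f_n\}$ and $\{f<f_n\}$ to reduce $|f-f_n|$ to signed pieces on which both the hypothesis and the base case apply, is exactly the right idea.
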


\begin{proof}
See \cite{and} or Theorem 3.24 of \cite{deP}.
\end{proof}

\end{section}

\end{document}